\newtheorem{thm}{Theorem}[section]
\newtheorem{propo}[thm]{Proposition}
\newtheorem{lem}[thm]{Lemma}
\newtheorem{cor}[thm]{Corollary}
\theoremstyle{definition}
\newtheorem{definition}[thm]{Definition}
\theoremstyle{remark}
\newtheorem{rmk}[thm]{Remark}
\newcommand{\CC}{\mathds{C}}
\newcommand{\ZZ}{\mathds{Z}}
\newcommand{\PP}{\mathds{P}}
\newcommand{\NN}{\mathds{N}}
\newcommand{\inv}{^{-1}}
\newcommand{\set}[1]{\left\{ #1 \right\}}
\newcommand{\lks}[1]{\LKS\left( #1 \right)}
\renewcommand{\hat}{\widehat}
\renewcommand{\tilde}{\widetilde}
\newcommand{\mcL}{\mathcal{L}}
\newcommand{\mcO}{\mathcal{O}}
\newcommand{\mcT}{\mathcal{T}}
\newcommand{\mcB}{\mathcal{B}}
\newcommand{\mcF}{\mathcal{M}}
\DeclareMathOperator{\HH}{H}
\DeclareMathOperator{\Stab}{Stab}
\DeclareMathOperator{\Aut}{Aut}
\DeclareMathOperator{\im}{Im}
\DeclareMathOperator{\Sing}{Sing}
\DeclareMathOperator{\Ind}{Ind}
\DeclareMathOperator{\LKS}{lks}
\DeclareMathOperator{\Pic}{Pic}
\begin{document}

\title{Non-homotopicity of the linking set of algebraic plane curves}

\author[B. Guerville-Ball\'e]{Beno\^it Guerville-Ball\'e}
        \address{Department of Mathematics, Tokyo Gakugei University, Koganei-shi, Tokyo 184-8501, Japan}
        \email{benoit.guerville-balle@math.cnrs.fr}
				\urladdr{www.benoit-guervilleballe.com}
\author[T. Shirane]{Taketo Shirane}
        \address{National Institute of Technology, Ube College, 2-14-1 Tokiwadai, Ube 755-8555, Yamaguchi Japan}
        \email{tshirane@ube-k.ac.jp}
				
\subjclass[2010]{14H50,	14H30, 14F45}		

\keywords{algebraic plane curves, Zariski pair, linking invariant, splitting numbers}	

\begin{abstract}
	The linking set is an invariant of algebraic plane curves introduced by Meilhan and the first author. It has been successfully used to detect several examples of Zariski pairs, i.e. curves with the same combinatorics and different embedding in $\CC\PP^2$. Differentiating Shimada's $\pi_1$-equivalent Zariski pair by the linking set, we prove, in the present paper, that this invariant is not determined by the fundamental group of the curve.
\end{abstract}

\maketitle

\section*{Introduction}

In the present paper, we consider the homeomorphism class of a topological pair formed by an algebraic plane curve and the complex projective plane $\PP^2:=\CC\PP^2$ (referred by the \textit{embedded topology} of a plane curve in $\PP^2$). 
The embedded topology of a plane curve in its tubular neighborhood is called the \textit{combinatorics} of the curve (see \cite{ACT:survey} for details). It is known since Zariski~\cite{Zar:problem} that the combinatorics does not determined the embedded topology of the plane curve in $\PP^2$. A pair of two curves with a same combinatorics and different embedded topologies is called a \textit{Zariski pair}. There exists several methods to differentiate them; using, for example, Alexander polynomial~\cite{Oka:alexander,Zar:problem}, fundamental group of the complement~\cite{Shi:ZP,Ryb:fundamental,ArtCogGueMar:arithmetic}, double branched covers~\cite{Deg:deformation,Deg:construction,Shi:lattice,ArtTok:curve,Ban:splitting}, linking set~\cite{Gue:arithmetic,GueMei,GueViu} or the braid monodromy~\cite{ACCM:topology,ACM:braid}. This abundance of methods points the range of possible differences between topologies. For example, since the Alexander polynomial is determined by the fundamental group, we could say that the two curves of a Zariski pair are topologically further if they differ by their Alexander polynomials than if they are $\pi_1$-equivalent. It is thus important to understand whether an invariant determines another invariant, or not. 

In the present paper, we give an answer to the previous question for the fundamental group of the complement and the \emph{linking set} introduced by Meilhan and the first author in~\cite{GueMei}. Indeed, we prove that the linking set is not determined by the fundamental group of the curve. This result is obtained relating, in Theorem~\ref{thm:equivalence} and Proposition~\ref{propo:equivalence}, the linking set and the \emph{splitting numbers} introduced by the second author in~\cite{Shi}. The bridge between these two invariants is the \emph{curves linking} (Definition~\ref{def:linking} and Theorem~\ref{thm:linking}). It is an invariant sensing ``linking'' as the linking set (Theorem~\ref{thm:equiv_linking}) and formulated in algebro-geometric terms as the splitting numbers (Definition~\ref{def:linking}). The final step is the using of the second author result in~\cite{Shi}, stating that the splitting numbers detect the $\pi_1$-equivalent $k$-plets of Zariski given by Shimada in~\cite{Shi:equisingular} (Theorem~\ref{thm:shimada} and Proposition~\ref{propo:shirane}).

\section{Linking invariants}\label{sec:linking}

In the present section, we define the notion of \emph{curves linking}. Then we relate it with the linking set introduced by Meilhan and the first author.

\subsection{Curves linking}\mbox{}

Let $C=F+B$ be an algebraic plane curve such that 
$F$ and $B$ are of degree $f$ and $b$ respectively, and 
locally around each point $P\in F \cap B$, $F$ is smooth. For $P\in F \cap B$, we denote by $I_P$ the multiplicity of the intersection of $F$ and $B$ at $P$. We define $\sigma : \hat{\PP^2} \rightarrow \PP^2$ as a succession of blowing-ups over $F\cap B$ such that the strict transform of $C$ intersects transversally with the exceptional divisor of $\sigma$. Let $\tilde{F}$ (resp. $\tilde{B}$) be the strict transform of $F$ (resp. $B$) by $\sigma$; for $P\in F \cap B$, let $D_P$ be the irreducible component of $\sigma^{\ast}(B)$ which intersects with $\tilde{F}$ over $P$, where $\sigma^{\ast}(B)$ is the pull-back of $B$ as a divisor. We define $\hat{B}$ as the curve on $\hat{\PP^2}$ whose support is the sum of components of $\sigma^{\ast}(B)$ except $D_P$ for all $P\in F \cap B$: 
\[\hat{B}=\sigma\inv(B)-\sum_{P\in F\cap B}D_P, \] 
where $\sigma\inv(B)$ is the set-theoretic pre-image of $B$ and is regarded as a reduced divisor. By construction, $\tilde{F}\cap\hat{B}=\emptyset$, thus $\tilde{F} \subset \hat{\PP^2}\setminus\hat{B}$.

\begin{definition}\label{def:linking}
	\mbox{}
	\begin{enumerate}
		\item The \emph{curves linking} of $F$ with $B$ under $\sigma$ is the sub-group of $\HH_1(\hat{\PP^2}\setminus\hat{B})$ defined by:
	\begin{equation*}
		\mcL_{\sigma}(F,B) = \im \Big(\tilde{i'}_*:\HH_1(\tilde{F}) \rightarrow \HH_1(\hat{\PP^2}\setminus\hat{B})\Big),
	\end{equation*}
	where $\tilde{i'}_*$ is the map induced on the first homology group by the inclusion of $\tilde{F}$ in $\hat{\PP^2}\setminus\hat{B}$.
		\item The \emph{linking index} of $F$ with $B$ under $\sigma$, denoted by $\left[ B : F \right]_{\mcL_{\sigma}}$, is defined as the index of $\mcL_{\sigma}(F,B)$ in $\HH_1(\hat{\PP^2}\setminus\hat{B})$.
	\end{enumerate}
\end{definition}

\begin{rmk}
	\begin{enumerate}
	\item In the proof of Theorem~\ref{thm:linking}, we show that the curves linking $\mcL_{\sigma}(F, B)$ is isomorphic to the quotient of $\HH_1(F\setminus B)$ by its subgroup determined by intersection of $F$ and local branches of $B$. In particular, the curves linking $\mcL_{\sigma}(F, B)$ does not depend on the choice of $\sigma$ up to isomorphism. Hence we omit $\sigma$ when we mention the curves linking and the linking index; for example $\mcL(F, B)$ and~$[B:F]_{\mcL}$. 
	\item The curves linking of $F$ with $B$ is not equivalent to the curves linking of $B$ with~$F$.
	\end{enumerate}
\end{rmk}

\begin{thm}\label{thm:linking}
	Let $C_1=F_1+B_1$ and $C_2=F_2+B_2$ be as previously defined. If there is a homeomorphsim $h:\PP^2\rightarrow\PP^2$ such that $h(F_1)=F_2$ and $h(B_1)=B_2$, then $h$ induces an isomorphism $\hat{h}: \HH_1(\hat{\PP^2}\setminus\hat{B_1}) \rightarrow \HH_1(\hat{\PP^2}\setminus\hat{B_2})$ and we have:
	\begin{equation*}
		\hat{h}(\mcL(F_1,B_1)) = \mcL(F_2,B_2).
	\end{equation*}
\end{thm}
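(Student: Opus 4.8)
The plan is to describe both $\HH_1(\hat{\PP^2}\setminus\hat B)$ and $\mcL_\sigma(F,B)$ intrinsically, in terms of $\PP^2\setminus B$ and $F\setminus B$, where $h$ acts visibly, and then to transport everything through these models. First I would record the elementary naturality. A homeomorphism $h\colon\PP^2\to\PP^2$ with $h(F_1)=F_2$ and $h(B_1)=B_2$ is in particular a homeomorphism of pairs $(\PP^2,F_1\cup B_1)\to(\PP^2,F_2\cup B_2)$, hence restricts to homeomorphisms $\PP^2\setminus B_1\xrightarrow{\sim}\PP^2\setminus B_2$ and $F_1\setminus B_1\xrightarrow{\sim}F_2\setminus B_2$ sitting in a commutative square, carrying meridians of the components of $B_1$ to meridians of the components of $B_2$ and the small loop $\gamma_P$ in $F_1\setminus B_1$ around $P\in F_1\cap B_1$ to the corresponding loop around $h(P)$. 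Moreover $h$ preserves the germ of $F\cup B$ at each point of $F\cap B$, hence --- classically, through the links of those points in $S^3$ --- preserves the local intersection multiplicities of $F$ with the components of $B$ at those points.

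The core step is the intrinsic description. Since $\sigma$ is an isomorphism away from $F\cap B\subset B$, it restricts to a biholomorphism $\hat{\PP^2}\setminus\sigma\inv(B)\xrightarrow{\sim}\PP^2\setminus B$, and $\hat{\PP^2}\setminus\hat B$ is obtained from $\hat{\PP^2}\setminus\sigma\inv(B)$ by re-inserting, for each $P\in F\cap B$, the curve $D_P\setminus\hat B$; these are connected, smooth and pairwise disjoint, so re-inserting them kills exactly their meridians and
\[
	\HH_1(\hat{\PP^2}\setminus\hat B)\;\cong\;\HH_1(\PP^2\setminus B)\big/\big\langle\,\mu_{D_P}\ :\ P\in F\cap B\,\big\rangle.
\]
By the projection formula $\mu_{D_P}$ bounds a small disc meeting $\sigma^{\ast}(B_j)$ with multiplicity $m_{D_P,j}$, the multiplicity of $D_P$ in $\sigma^{\ast}(B_j)$; restricting the local equation of $B_j$ to $\tilde F$ through the isomorphism $\sigma\colon(\tilde F,Q_P)\xrightarrow{\sim}(F,P)$ --- legitimate because $F$ is smooth near $F\cap B$ and $\sigma$ blows up only there, $Q_P=\tilde F\cap D_P$ --- gives $m_{D_P,j}=(F\cdot B_j)_P$, so the image of $\mu_{D_P}$ in $\HH_1(\PP^2\setminus B)$ equals $\sum_j (F\cdot B_j)_P\,\mu_{B_j}$, which is also the class of $\gamma_P$. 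For the linking set, $\tilde F\cong F$ meets each $D_P$ transversally at the single point $Q_P$ and is disjoint from $\hat B$, and $\HH_1(\tilde F\setminus\{Q_P\}_P)\twoheadrightarrow\HH_1(\tilde F)$ while $\tilde F\setminus\{Q_P\}_P\cong F\setminus B$, whence
\[
	\mcL_\sigma(F,B)\;=\;\im\Big(\HH_1(F\setminus B)\longrightarrow\HH_1(\PP^2\setminus B)\longrightarrow\HH_1(\hat{\PP^2}\setminus\hat B)\Big),
\]
the first map induced by inclusion, the second the quotient above. Thus $\mcL_\sigma(F,B)$ is a quotient of $\HH_1(F\setminus B)$ by the subgroup generated by the classes $\gamma_P$ together with $\ker\big(\HH_1(F\setminus B)\to\HH_1(\PP^2\setminus B)\big)$; in particular it does not depend on $\sigma$, which is the assertion used in the Remark.

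The theorem then follows formally: by the first paragraph $h$ induces a commutative square between the maps $\HH_1(F_i\setminus B_i)\to\HH_1(\PP^2\setminus B_i)$ and sends the relators $\sum_j (F_1\cdot (B_1)_j)_P\,\mu_{(B_1)_j}$ to the relators $\sum_j (F_2\cdot (B_2)_j)_{h(P)}\,\mu_{(B_2)_j}$, since the intersection multiplicities and the labelling of the components of $B$ are preserved; passing to the quotients above, the isomorphism $\HH_1(\PP^2\setminus B_1)\cong\HH_1(\PP^2\setminus B_2)$ descends to the asserted $\hat h\colon\HH_1(\hat{\PP^2}\setminus\hat{B_1})\xrightarrow{\sim}\HH_1(\hat{\PP^2}\setminus\hat{B_2})$, and as $\mcL(F_i,B_i)$ is the image of $\HH_1(F_i\setminus B_i)$ there, commutativity yields $\hat h(\mcL(F_1,B_1))=\mcL(F_2,B_2)$. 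I expect the main obstacle to be the displayed isomorphism for $\HH_1(\hat{\PP^2}\setminus\hat B)$: it rests on the topological principle that adjoining a disjoint family of connected, properly embedded smooth curves to a complex surface minus a curve kills exactly their meridians (a Mayer--Vietoris computation around each point of $F\cap B$, using that the first homology of a neighbourhood of the exceptional locus minus the total transform is generated by meridians), combined with the projection-formula identity $m_{D_P,j}=(F\cdot B_j)_P$; a secondary point needing care is the classical invariance of the local intersection multiplicities under a homeomorphism of pairs, which is what makes the distinguished relators match. (Alternatively one could build $\hat h$ as a genuine homeomorphism of the blown-up complements, invoking the topological invariance of the embedded resolution of a plane curve germ, but the homological route above is cleaner.)
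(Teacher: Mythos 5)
Your proposal is correct and takes essentially the same route as the paper: you identify $\HH_1(\hat{\PP^2}\setminus\hat{B})$ with $\HH_1(\PP^2\setminus B)$ modulo the pushed-down meridians of the divisors $D_P$ (this is exactly the paper's subgroup $\Ind_F$ and Lemma~\ref{lem:meridian}), identify $\mcL(F,B)$ with the image of $\HH_1(F\setminus B)$ in that quotient (the paper's commutative diagram in Step~2), and then transport both structures through the restriction of $h$, using that $h$ sends meridians to meridians and the boundary of a small disc in $F_1$ about $P$ to the corresponding loop about $h(P)$. The only minor variation is how the relator attached to $P$ is computed: you use the projection formula/restriction of local equations to get $\sum_j (F\cdot B_j)_P\,\mu_{B_j}$, whereas the paper expresses it as $\sum_{b\in\mcB_P} I_{P,b}\, m_b$ via linking numbers in the link of $P$ (Brieskorn); these agree in $\HH_1(\PP^2\setminus B)$, so this is a cosmetic difference rather than a different proof.
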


%
%
In order to prove this theorem, let us introduce some additional objects and notation together with two lemmas. Let $\sigma_B$ be the homeomorphism obtained by the restriction of $\sigma$ to $\hat{\PP^2}\setminus\sigma\inv(B)\rightarrow \PP^2\setminus B$. We denote by $\sigma_{B,*}$ the isomorphism induced by $\sigma_B$ on the first homology groups. By construction $\hat{\PP^2}\setminus\sigma\inv(B)$ is contained in $\hat{\PP^2}\setminus\hat{B}$, we denote by $j$ this inclusion and by $j_*$ the map induced on the first homology groups. Let $\Ind_F$ be the kernel of the map $j_*\circ\sigma_{B,*}\inv$. From this, we obtain the following isomorphism:
\begin{equation}\label{eq:isomorphism}
	\psi :  \HH_1(\PP^2\setminus B) / \Ind_F \longrightarrow \HH_1(\hat{\PP^2}\setminus\hat{B}).
\end{equation}
We denote by $\phi$ the inverse of $\psi$.

\begin{lem}\label{lem:meridian}
	The sub-group $\Ind_F$ is generated by:
	\begin{equation}\label{eq:meridians}
		m_P=\sum\limits_{b\in\mcB_P} I_{P,b} \cdot m_b,\quad \forall P \in F \cap B,
	\end{equation}
	where $\mcB_P$ is the set of local branches of $B$ at $P$, $I_{P,b}$ is the multiplicity of the intersection of $b$ with the local branch of $P$ contained in $F$, and $m_b$ is the meridian of the local branch~$b$.
\end{lem}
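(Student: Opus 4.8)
The plan is to compute the kernel $\Ind_F$ of $j_*\circ\sigma_{B,*}^{-1}$ by a local-to-global analysis of the blow-up $\sigma$. First I would recall the standard description of $\HH_1(\PP^2\setminus B)$: by Zariski-van Kampen type arguments (or simply Lefschetz duality / the exact sequence of the pair), it is generated by the meridians of the irreducible components of $B$, subject to the relation that the total linking with $B$ vanishes; more usefully, $\HH_1(\PP^2\setminus B)\cong\HH_1(\hat{\PP^2}\setminus\sigma^{-1}(B))$ via $\sigma_{B,*}$, and the latter is generated by the meridians of \emph{all} irreducible components of $\sigma^{-1}(B)$, i.e. the strict transforms of components of $B$ together with the exceptional curves, modulo the relations coming from the blow-up. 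Then $\Ind_F$ is exactly the subgroup that becomes trivial once we fill back in the divisors $D_P$, i.e. it is generated by the meridians $m_{D_P}$ of the curves $D_P$ for $P\in F\cap B$, expressed back in $\HH_1(\PP^2\setminus B)$ through $\sigma_{B,*}^{-1}$.

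The core of the argument is therefore local, around a single point $P\in F\cap B$. Here I would use the hypothesis that $F$ is locally smooth at $P$: the succession of blow-ups $\sigma$ over $P$ resolves the intersection of the local branch $F_P$ of $F$ with the local branches $b\in\mcB_P$ of $B$, and $D_P$ is singled out as the unique component of $\sigma^\ast(B)$ meeting $\tilde F$. In the local ring of $\hat{\PP^2}$ near the point $\tilde F\cap D_P$, one writes the pullback $\sigma^\ast(B)$ as a sum of exceptional components with multiplicities; the meridian $m_{D_P}$, transported back to the smooth generic point of $F\setminus B$, equals $\sum_{b\in\mcB_P} I_{P,b}\cdot m_b$, where the coefficient $I_{P,b}=(F\cdot b)_P$ appears because $D_P$ has multiplicity $I_{P,b}$ in the total transform of $b$ along the relevant exceptional chain (this is the standard ``a small loop around the last exceptional curve links each branch $b$ with multiplicity equal to the intersection number'' computation, e.g. via the resolution graph / splice diagram of the curve germ $F_P\cup\bigcup b$). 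Summing over all $P$ gives the claimed generating set $\{m_P\}_{P\in F\cap B}$.

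Concretely, the key steps in order are: (1) identify $\HH_1(\hat{\PP^2}\setminus\hat B)$ with $\HH_1(\hat{\PP^2}\setminus\sigma^{-1}(B))$ modulo the normal subgroup generated by the meridians $m_{D_P}$, using that $\hat B=\sigma^{-1}(B)-\sum_P D_P$ and that adding back a smooth irreducible divisor to the complement kills precisely its meridian; (2) transport this along $\sigma_{B,*}^{-1}$ to conclude $\Ind_F=\langle \sigma_{B,*}^{-1}(m_{D_P}) : P\in F\cap B\rangle$; (3) do the local computation at each $P$ to show $\sigma_{B,*}^{-1}(m_{D_P})=\sum_{b\in\mcB_P} I_{P,b}\,m_b=m_P$, reading off the multiplicity of $D_P$ in $\sigma^\ast(b)$ from the embedded resolution of $F_P\cup b$ and using that $m_{D_P}$ bounds a disc meeting $\sigma^\ast(B)$ transversally in these points. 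I expect step (3) — pinning down the multiplicity with which the distinguished exceptional component $D_P$ occurs in the total transform of each branch $b$, and checking it is exactly the local intersection multiplicity $I_{P,b}$ — to be the main obstacle, and I would handle it by induction on the length of the blow-up sequence over $P$, or by invoking the classical formula for multiplicities along exceptional divisors in an embedded resolution (the ``proximity'' relations), together with the fact that $\tilde F$ meets $D_P$ at a single transverse point.
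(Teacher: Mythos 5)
Your proposal is correct in substance and shares the paper's global reduction --- both identify $\Ind_F$ with the subgroup generated by the pushed-down meridians of the exceptional components $D_P$, a step the paper also treats as immediate --- but it takes a genuinely different route at the key local step. The paper chooses the meridian of $D_P$ \emph{inside} $\tilde{F}$, so that its image under $\sigma$ is the boundary $m_P$ of a small disc in $F$ centred at $P$, i.e.\ the $F$-component of the local link $L_P=S^3\cap C$; the identity $m_P=\sum_{b\in\mcB_P}\ell(m_P,c_b)\,m_b$ in $\HH_1(S^3\setminus L_P)$ together with Brieskorn's theorem that $\ell(m_P,c_b)$ equals the local intersection multiplicity then yields Equation~(\ref{eq:meridians}) at once, with no analysis of the blow-up combinatorics. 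You instead take a generic meridian of $D_P$ (homologous to the paper's choice), observe that its linking with a local branch $b$ equals the multiplicity of $D_P$ in $\sigma^{\ast}(b)$, and then must show this multiplicity is $I_{P,b}$ --- the step you rightly flag as the main obstacle. That claim is true, and your proposed tools (induction on the blow-up chain, proximity/Noether relations using the smoothness of $F$ at $P$) do establish it; a quicker way to close it is the local projection formula $(\sigma^{\ast}b\cdot\tilde{F})_{\tilde{P}}=(b\cdot F)_P$ combined with the fact that at $\tilde{P}$ the curve $\tilde{F}$ meets $\sigma^{\ast}(b)$ only along $D_P$, transversally --- essentially the same projection-formula trick the paper uses later to prove Lemma~\ref{lem:multiplicity}. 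The trade-off: the paper's argument is shorter because Brieskorn's result packages all the resolution data into one linking-number statement, while yours stays entirely within the algebro-geometric resolution picture and in effect proves a branch-by-branch refinement of Lemma~\ref{lem:multiplicity}, at the cost of carrying out the multiplicity computation.
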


\begin{proof}
  It is clear that the kernel of the map $\HH_1(\PP^2\setminus B)\longrightarrow \HH_1(\hat{\PP^2}\setminus\hat{B}) $ is generated by the pre-images $m_P$ of the meridians of the exceptional divisors $D_P$ for $P\in F \cap B$. It is sufficient to prove that the $m_P$ are given by Equation~(\ref{eq:meridians}).
  
  Let $\mu_P$ be the meridian of the exceptional divisor $D_P$. We assume that $\mu_P$ is contained in $\tilde{F}$. The image of $\mu_P$ by $\sigma$ is the boundary $m_P$ of a disc centered in $P$ and contained in $F$. Let $S^3$ be the boundary of a 4-ball centered in $P$. Up to deformation, we can assume that $m_P$ is contained in $S^3$. We consider $L_P$ the link of $S^3$ defined as $S^3\cap C$. The component of $L_P$ associated to $F$ is unique since $F$ is locally smooth in $P$ and by construction it is $m_P$. In $S^3\setminus L_P$ we have that $m_P = \sum\limits_{b\in\mcB_P} \ell(m_P, c_b)m_b$, where $\ell(m_P, c_b)$ is the linking number of the component $m_P$ with $c_b$ the one associated with the local branch $b$ and $m_b$ is a meridian of $b$ contained in $S^3$. We conclude using the result of Brieskorn~\cite[Chapter~3, Proposition~13]{Brieskorn} stating that the linking number $ \ell(m_P, c_b)$ is equal to the multiplicity of the intersection of the corresponding local branches.
\end{proof}

\begin{lem}
	The map $\sigma$ naturally induces an isomorphism:
	\begin{equation*}
		\sigma_{F,*} : \HH_1(\tilde{F}\setminus \sigma\inv(B)) \longrightarrow \HH_1(F \setminus B).
	\end{equation*}
\end{lem}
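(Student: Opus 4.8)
The plan is to show that $\sigma$ restricts to an isomorphism between $\tilde{F}\setminus\sigma^{-1}(B)$ and $F\setminus B$ as topological spaces, from which the induced isomorphism on $\HH_1$ follows immediately by functoriality. First I would recall that $\sigma:\hat{\PP^2}\to\PP^2$ is a composition of blowing-ups with centers lying over the finite set $F\cap B$, hence $\sigma$ is an isomorphism away from the exceptional locus: more precisely, $\sigma$ restricts to a biregular (in particular homeomorphic) map $\hat{\PP^2}\setminus E\to\PP^2\setminus(F\cap B)$, where $E=\sigma^{-1}(F\cap B)$ is the total exceptional divisor. Restricting this homeomorphism to the strict transform $\tilde{F}$, which by definition of ``strict transform'' maps isomorphically onto $F$ outside the centers, one gets a homeomorphism $\tilde{F}\setminus E\xrightarrow{\ \sim\ } F\setminus(F\cap B)=F\setminus B$.

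The one point requiring care — and the main (mild) obstacle — is to identify $\tilde{F}\setminus\sigma^{-1}(B)$ with $\tilde{F}\setminus E$, i.e.\ to check that removing $\sigma^{-1}(B)$ from $\tilde F$ is the same as removing $E$ from $\tilde F$. On $\tilde{F}$ we have $\sigma^{-1}(B)\cap\tilde F = (\tilde B\cup E)\cap\tilde F$ set-theoretically, since $\sigma^{-1}(B)$ is supported on $\tilde B$ together with the exceptional components. But $\tilde F\cap\tilde B=\emptyset$: indeed, after the blowing-ups $\sigma$ the strict transform of $C=F+B$ meets the exceptional divisor transversally, so $\tilde F$ and $\tilde B$ are separated — any intersection point of $F$ and $B$ has been resolved. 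Hence $\sigma^{-1}(B)\cap\tilde F = E\cap\tilde F$, and the two open subsets of $\tilde F$ coincide. (One should also note $\tilde F$ is smooth and irreducible since $F$ is locally smooth along $F\cap B$, so these are honest open Riemann surfaces and $\HH_1$ behaves well.)

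Putting the pieces together: $\sigma|_{\tilde F}$ carries $\tilde F\setminus\sigma^{-1}(B)=\tilde F\setminus E$ homeomorphically onto $F\setminus(F\cap B)=F\setminus B$, and therefore induces an isomorphism
\[
\sigma_{F,*}:\HH_1\bigl(\tilde F\setminus\sigma^{-1}(B)\bigr)\longrightarrow\HH_1(F\setminus B)
\]
on first homology, which is precisely the assertion. I do not expect any serious difficulty here; the statement is essentially bookkeeping about blowing-ups, and the only substantive input is the transversality/separation property built into the definition of $\sigma$ together with the local smoothness of $F$ along $F\cap B$.
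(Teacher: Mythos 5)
Your proof is correct and follows essentially the same route as the paper, which simply notes that $\sigma$ restricts to an isomorphism $\tilde{F}\setminus\sigma\inv(B)\to F\setminus B$ because the blow-ups occur only over $F\cap B$; you merely spell out the bookkeeping (identifying $\tilde{F}\setminus\sigma\inv(B)$ with the complement of the exceptional locus in $\tilde F$) that the paper leaves implicit.
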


\begin{proof}
Since $\sigma:\hat{\PP^2}\to\PP^2$ is a succession of blowing-ups over the intersection $F\cap B$, the restriction $\sigma_F:\tilde{F}\setminus\sigma\inv(B)\to F\setminus B$ of $\sigma$ is isomorphic. Thus the map $\sigma_F$ induces the isomorphism $\sigma_{F,\ast}:\HH_1(\tilde{F}\setminus\sigma\inv(B))\to \HH_1(F\setminus B)$. 
\end{proof}

\begin{proof}[Proof of Theorem~\ref{thm:linking}]
	We divide this proof in two steps. First we construct the map $\hat{h}$ then we prove the equality. By the way, to differentiate the maps associated with $C_1$ and the one with $C_2$, we add accordingly an exponent $1$ or $2$ to the denomination of these applications. The exponent is omitted if the proposition concerns both $C_1$ and $C_2$.\\
	
\noindent\textsc{Step 1.} Let $h_r:\PP^2\setminus B_1\to\PP^2\setminus B_2$ be the restriction of $h$ to the complement of $B_1$ and $B_2$. By hypothesis, this map is an homeomorphism too. Thus the map $h_{r,\ast}:\HH_1(\PP^2\setminus B_1)\to\HH_1(\PP^2\setminus B_2)$ induced by $h_r$ is an isomorphism. The map $h$ send meridians of $B_1$ to meridians of $B_2$, then $h_r$ too. Thus, Lemma~\ref{lem:meridian} implies that $h_r(\Ind_{F_1})=\Ind_{F_2}$. Hence the map $h_{r,\ast}$ induces an isomorphism $h_{r,\ast}':\HH_1(\PP^2\setminus B_1)/\Ind_{F_1}\to \HH_1(\PP^2\setminus B_2)/\Ind_{F_2}$. Using the isomorphism~(\ref{eq:isomorphism}), we define $\hat{h}$ as $\psi^2\circ h_{r,\ast}' \circ \phi^1$ (or equivalently $(\phi^2)\inv\circ h_{r,\ast}' \circ \phi^1$).\\
	
\noindent\textsc{Step 2.} To obtain the equality, consider the following diagram:
	\begin{equation*}
		\begin{tikzcd}
			\HH_1(\tilde{F}) \arrow{r}{\tilde{i'}_*} & 
			\HH_1(\hat{\PP^2}\setminus\hat{B}) \arrow[rightarrow, start anchor=east, end anchor=north west, "\phi"']{rd}{\simeq}& 
			\\
			\HH_1(\tilde{F}\setminus\sigma\inv(B)) \arrow[twoheadrightarrow]{u}{j_{F,\ast}}\arrow{r}{\tilde{i}_*}\arrow[rightarrow, "\simeq"']{d}{\sigma_{F,*}} & 
			\HH_1(\hat{\PP^2}\setminus\sigma\inv(B))\arrow[twoheadrightarrow]{u}{j_*}\arrow[rightarrow, "\simeq"']{d}{\sigma_{B,*}} & 
			\HH_1(\PP^2\setminus B) / \Ind_F 
			\\
			\HH_1(F\setminus B)\arrow{r}{i_*} & 
			\HH_1(\PP^2\setminus B) \arrow[start anchor=east, end anchor=south west]{ru}{\pi} &
		\end{tikzcd}
	\end{equation*}
	Here, $j_{F,\ast}$, $\tilde{i}_{\ast}$ and $i_\ast$ are the maps induced by the inclusions $\tilde{F}\setminus\sigma\inv(B)\to\tilde{F}$, $\tilde{F}\setminus\sigma\inv(B)\to\hat{\PP^2}\setminus\sigma\inv(B)$ and $F\setminus B\to\PP^2\setminus B$, respectively.  
	By construction, the three sqares are commutative. This implies that:
	\begin{equation*}
		\im(\tilde{i'}_*) = \im(\tilde{i'}_\ast\circ j_{F,\ast}\circ\sigma_{F,\ast}\inv) = \im(\phi\inv\circ\pi \circ i_*).
	\end{equation*}
	Moreover, by the construction of $h_{r,\ast}'$, we have $h_{r,\ast}'\circ\pi^1\circ i_{\ast}^1=\pi^2\circ i_{\ast}^2\circ h_{F,\ast}$, where $h_{F,\ast}:\HH_1(F_1\setminus B_1)\to\HH_1(F_2\setminus B_2)$ is the isomorphism induced by the restriction of $h$ to $F_1\setminus B_1$. We obtain that:
	\begin{align*}
		\hat{h}(\mcL(F_1,B_1))
		& = (\phi^2)\inv \circ h_{r,\ast}' \circ \phi^1  \left(\im(\tilde{i'}^1_*)\right) \\
		& = \im\left((\phi^2)\inv \circ h_{r,\ast}' \circ\pi^1\circ i^1_\ast\right) \\
		& = \im\left((\phi^2)\inv \circ \pi^2 \circ i^2_*\circ h_{F,\ast}\right) \\
		& = \im\left(\tilde{i'}^2_*\right) = \mcL(F_2,B_2).
		\qedhere
	\end{align*}
\end{proof}

\begin{cor}\label{cor:linking_index}
	If $C_1=F_1+B_1$ and $C_2=F_2+B_2$ are two homeomorphic curves as in Theorem~\ref{thm:linking}, then the linking index of $F_1$ with $B_1$ is equal to the one of $F_2$ with $B_2$.
\end{cor}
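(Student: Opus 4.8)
The plan is to deduce Corollary~\ref{cor:linking_index} directly from Theorem~\ref{thm:linking}, since the linking index is by Definition~\ref{def:linking}(2) nothing but the index of the subgroup $\mcL(F,B)$ inside $\HH_1(\hat{\PP^2}\setminus\hat{B})$, and Theorem~\ref{thm:linking} provides a group isomorphism carrying one such pair (ambient group, subgroup) to the other. First I would invoke Theorem~\ref{thm:linking}: from the homeomorphism $h:\PP^2\to\PP^2$ with $h(F_1)=F_2$ and $h(B_1)=B_2$ we obtain the isomorphism $\hat{h}:\HH_1(\hat{\PP^2}\setminus\hat{B_1})\to\HH_1(\hat{\PP^2}\setminus\hat{B_2})$ together with the equality $\hat{h}(\mcL(F_1,B_1))=\mcL(F_2,B_2)$.

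Next I would use the elementary fact that an isomorphism of abelian groups restricting to an isomorphism between two subgroups induces an isomorphism between the respective quotients, hence preserves the index of the subgroup (whether finite or infinite). Concretely, $\hat{h}$ descends to an isomorphism
\begin{equation*}
	\overline{\hat{h}}: \HH_1(\hat{\PP^2}\setminus\hat{B_1})/\mcL(F_1,B_1) \longrightarrow \HH_1(\hat{\PP^2}\setminus\hat{B_2})/\mcL(F_2,B_2),
\end{equation*}
because $\hat{h}$ is surjective and sends $\mcL(F_1,B_1)$ onto $\mcL(F_2,B_2)$. Since isomorphic groups have the same cardinality, the two quotients have the same order, i.e.
\begin{equation*}
	[\,B_1:F_1\,]_{\mcL} = \big[\HH_1(\hat{\PP^2}\setminus\hat{B_1}):\mcL(F_1,B_1)\big] = \big[\HH_1(\hat{\PP^2}\setminus\hat{B_2}):\mcL(F_2,B_2)\big] = [\,B_2:F_2\,]_{\mcL},
\end{equation*}
which is exactly the claimed equality. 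One should note that by the Remark following Definition~\ref{def:linking} the curves linking, and hence the linking index, are independent of the choice of the blow-up $\sigma$, so the statement is well posed and the notation $[B:F]_{\mcL}$ is unambiguous.

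There is essentially no obstacle here: the only thing to be a little careful about is the case of infinite index, where "equal" must be read as equality of cardinalities (or the convention that both are $\infty$); the argument via the induced quotient isomorphism handles this uniformly and needs no separate treatment. Thus the corollary is a purely formal consequence of Theorem~\ref{thm:linking}.
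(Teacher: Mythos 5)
Your proof is correct and is exactly the argument the paper has in mind: the corollary is stated without proof as an immediate consequence of Theorem~\ref{thm:linking}, since the isomorphism $\hat{h}$ carries $\mcL(F_1,B_1)$ onto $\mcL(F_2,B_2)$ and therefore preserves the index. Your added remarks on well-definedness and the infinite-index case are fine but not needed beyond this formal deduction.
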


\subsection{Relation with the linking set}\mbox{}

In~\cite{GueMei}, Meilhan and the first author introduce another linking invariant of algebraic plane curve: the \emph{linking set}. In some particular cases, this invariant and the curve linking are equivalents.\\

One of the condition needed to obtain the equivalence is: 
\begin{equation}\label{eq:cond1}\tag{C.1}
	F \text{ is irreducible or } \HH_1(F)\simeq\ZZ. 
\end{equation}
We thus introduce the linking set up to this hypothese. The definition here given come from~\cite[Proposition~3.11]{GueMei} and Lemma~\ref{lem:meridian}. Let $\gamma$ be the image of an embedding of $S^1$ in $F$, such that $\gamma$ is not homologically trivial in $F$ and $\gamma \cap \Sing(C) = \emptyset$. 

\begin{definition}\label{def:linking_set}
	The \emph{linking set} of $\gamma$ is given by:
	\begin{equation*}
		\lks{\gamma} = \tilde{j}_\ast\left( \Big\{ \sum\limits_{i=1}^l a_i\cdot g_i \mid (a_1,\dots,a_l)\in\ZZ^l\setminus \set{(0,\dots,0)} \Big\} \right) \subset \HH_1(\PP^2\setminus B) / \Ind_F,
	\end{equation*}
	where $\set{g_1,\dots,g_l}$ is a basis of cycles of $\HH_1(F)$ contained in $F\setminus B$; and $\tilde{j}_\ast$ is the map induced by the inclusion of $F\setminus B$ in $\PP^2\setminus B$.
\end{definition}

The second needed condition to obtain the equivalence is:
\begin{equation}\label{eq:cond2}\tag{C.2}
	\HH_1(\PP^2\setminus B) / \Ind_F \text{ is a finite group}. 
\end{equation}
It is, for example, the case if $B$ is smooth. With this additional condition, the linking set always contains the trivial element. That is:
\begin{equation*}
	\lks{\gamma} = \tilde{j}_\ast\left( \Big\{ \sum\limits_{i=1}^l a_i\cdot g_i \mid (a_1,\dots,a_l)\in\ZZ^l \Big\} \right).
\end{equation*}
Thereby, the linking set does not depend on the cycle $\gamma$ but only on the curve $F$ containing $\gamma$. Thus, we can denoted it by $\lks{F}$. Furthermore, if we add a combinatorial condition on the plane curve $C=F+B$, then~\cite[Theorem 3.13]{GueMei} can be improved as follows:

\begin{thm}[\cite{GueMei}]
	Let $C_1=F_1+B_1$ and $C_2=F_2+B_2$ be two curves verifying the conditions~(\ref{eq:cond1}) and~(\ref{eq:cond2}), with the same embedded topology. Assume that any automorphism of the combinatorics of $C_i$ fixes $F_i$ and $B_i$ (i.e. any homeomorphism $h:(T(C_i),C_i)\to (T(C_i),C_i)$ satisfies $h(F_i)=F_i$ and $h(B_i)=B_i$, where $T(C_i)$ is a tubular neighborhood of $C_i$). Then we have:
	\begin{equation*}
		\lks{F_1} = \lks{F_2}.
	\end{equation*}
\end{thm}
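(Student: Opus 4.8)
The plan is to reduce the statement to Theorem~\ref{thm:linking} by first recognising that, under conditions~\eqref{eq:cond1} and~\eqref{eq:cond2}, the linking set $\lks{F}$ is nothing but the curves linking $\mcL(F,B)$ transported through the isomorphism~\eqref{eq:isomorphism}. Keeping the notation of the proof of Theorem~\ref{thm:linking} and writing $\pi$ for the quotient map $\HH_1(\PP^2\setminus B)\to\HH_1(\PP^2\setminus B)/\Ind_F$, that proof already gives $\mcL(F,B)=\im\big(\phi\inv\circ\pi\circ i_\ast\big)=\psi\big(\pi(i_\ast(\HH_1(F\setminus B)))\big)$. Since $F$ is smooth at each point of $F\cap B$, the long exact sequence of the pair $(F,F\setminus B)$ (after excising around $F\cap B$) shows that $\HH_1(F\setminus B)\to\HH_1(F)$ is surjective with kernel generated by the small loops $\lambda_P$, $P\in F\cap B$; hence $\HH_1(F\setminus B)$ is generated by any family $g_1,\dots,g_l$ of cycles in $F\setminus B$ lifting a basis of $\HH_1(F)$ together with the $\lambda_P$. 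By the proof of Lemma~\ref{lem:meridian} one has $i_\ast(\lambda_P)=m_P\in\Ind_F=\ker\pi$, so $\pi(i_\ast(\HH_1(F\setminus B)))=\pi(i_\ast(\langle g_1,\dots,g_l\rangle))$, which by~\eqref{eq:cond2} and Definition~\ref{def:linking_set} is exactly $\lks F$. Therefore $\lks F=\psi\inv(\mcL(F,B))=\phi(\mcL(F,B))$.

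Next I would produce a homeomorphism of $\PP^2$ respecting the two decompositions. Having the same embedded topology gives a homeomorphism $h\colon\PP^2\to\PP^2$ with $h(C_1)=C_2$; restricting $h$ to tubular neighbourhoods and composing with a fixed combinatorial identification $(T(C_2),C_2)\to(T(C_1),C_1)$ (which exists, with corresponding decompositions, since $C_1$ and $C_2$ have the same combinatorics) yields an automorphism of the combinatorics of $C_1$, which by hypothesis fixes $F_1$ and $B_1$. Hence $h(F_1)=F_2$ and $h(B_1)=B_2$, so Theorem~\ref{thm:linking} applies and produces an isomorphism $\hat h\colon\HH_1(\hat{\PP^2}\setminus\hat{B_1})\to\HH_1(\hat{\PP^2}\setminus\hat{B_2})$ with $\hat h(\mcL(F_1,B_1))=\mcL(F_2,B_2)$; moreover, by Step~1 of the proof of Theorem~\ref{thm:linking}, $\hat h=\psi^2\circ h_{r,\ast}'\circ\phi^1$, where $h_{r,\ast}'\colon\HH_1(\PP^2\setminus B_1)/\Ind_{F_1}\to\HH_1(\PP^2\setminus B_2)/\Ind_{F_2}$ is the isomorphism induced by $h$. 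Combining this with the first paragraph, $\psi^2\big(h_{r,\ast}'(\lks{F_1})\big)=\hat h\big(\psi^1(\lks{F_1})\big)=\mcL(F_2,B_2)=\psi^2(\lks{F_2})$, whence $h_{r,\ast}'(\lks{F_1})=\lks{F_2}$. Finally, $h_{r,\ast}'$ sends meridians of $B_1$ to the corresponding meridians of $B_2$ and $\Ind_{F_1}$ to $\Ind_{F_2}$, the latter being generated by the combinatorially determined elements $m_P$ of Lemma~\ref{lem:meridian}; so $h_{r,\ast}'$ is the identification of the two quotient groups induced by the common combinatorics, and $h_{r,\ast}'(\lks{F_1})=\lks{F_2}$ reads $\lks{F_1}=\lks{F_2}$.

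I expect the genuinely delicate point to be this last passage, from $h_{r,\ast}'(\lks{F_1})=\lks{F_2}$ to the bare equality $\lks{F_1}=\lks{F_2}$: one has to argue that the comparison is made along the canonical combinatorial identification of the ambient groups, and this is exactly where the hypothesis on the automorphisms of the combinatorics is indispensable — without it the decomposition $C=F+B$ need not be combinatorially intrinsic and the conclusion would hold only up to a combinatorial automorphism of $\HH_1(\PP^2\setminus B)/\Ind_F$. A more routine technical point is the claim, used in the first paragraph, that $\HH_1(F\setminus B)$ is generated by the $g_i$ together with the meridians $m_P$, which relies on the standing local smoothness of $F$ along $F\cap B$; conditions~\eqref{eq:cond1} and~\eqref{eq:cond2} are needed only so that Definition~\ref{def:linking_set} applies and so that $\lks F$ is a subgroup independent of the chosen cycle.
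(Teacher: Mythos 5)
Your argument takes a genuinely different route from the paper's. The paper's proof is a two-line reduction to \cite[Theorem~3.13]{GueMei}, invoking \cite[Corollary~4.6]{GueMei} to remove the \emph{oriented} hypothesis of that theorem and the assumption on combinatorial automorphisms to remove the \emph{ordered} one. You instead rebuild the statement inside the present paper, from Theorem~\ref{thm:linking} together with the identification $\lks{F}=\phi(\mcL(F,B))$; your first paragraph is, in substance, the paper's own proof of Theorem~\ref{thm:equiv_linking}, and there is no circularity in using it here. Up to the equality $h_{r,\ast}'(\lks{F_1})=\lks{F_2}$ the argument is correct, and your use of the automorphism hypothesis to force $h(F_1)=F_2$ and $h(B_1)=B_2$ is exactly how the ``ordered'' ambiguity is disposed of.

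The gap is at the point you yourself flagged, and your resolution of it is insufficient. You assert that $h_{r,\ast}'$ ``sends meridians of $B_1$ to the corresponding meridians of $B_2$'' and hence coincides with the canonical combinatorial identification of the two quotient groups. This is unjustified: every self-homeomorphism of $\PP^2$ preserves the ambient orientation, but it may reverse the complex orientations of the curves, in which case it sends a positively oriented meridian $m_b$ to the \emph{inverse} of a meridian of $h(b)$ (complex conjugation, for curves with real equations, does exactly this). In Step~1 of the proof of Theorem~\ref{thm:linking} this sign is harmless because only the subgroup $\Ind_F$ matters there; in your last step it is the whole issue: it is precisely the ``oriented'' hypothesis of \cite[Theorem~3.13]{GueMei}, which the paper removes by citing \cite[Corollary~4.6]{GueMei} --- not by the automorphism hypothesis, which only controls which components go where. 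Your argument can be repaired without that citation: since $h_\ast$ acts on $\HH_2(\PP^2)\simeq\ZZ$ by a single global sign $\varepsilon=\pm1$, comparing $h_\ast[b]$ with the complex fundamental class of $h(b)$ gives $h_{r,\ast}(m_b)=\varepsilon\, m_{h(b)}$ for every component $b$ of $B_1$, so $h_{r,\ast}'$ differs from the combinatorial identification at most by $x\mapsto -x$; and under condition~(\ref{eq:cond2}) the set $\lks{F_1}$ is a subgroup, hence stable under negation, so the conclusion still follows. As written, however, this orientation step is missing, and it is the one genuinely nontrivial point of the statement.
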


\begin{proof}
	By~\cite[Corollary 4.6]{GueMei}, we can remove the oriented hypothesis of~\cite[Theorem 3.13]{GueMei}; the ordered hypothesis can be removed because all the automorphisms of the combinatorics of $C_i$ fix $F_i$ and $B_i$. 
\end{proof}

\begin{thm}\label{thm:equiv_linking}
	Let $C=F+B$ be a curve verifying conditions~(\ref{eq:cond1}) and~(\ref{eq:cond2}), we have:
	\begin{equation*}
		\lks{F} = \phi(\mcL(F,B)),
	\end{equation*}
	where $\phi$ is the isomorphism from $\HH_1(\hat{\PP^2}\setminus\hat{B})$ to $\HH_1(\PP^2\setminus B)/\Ind_F$.
\end{thm}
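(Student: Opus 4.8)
The plan is to transport the curves linking $\mcL(F,B)$ from $\HH_1(\hat{\PP^2}\setminus\hat B)$ down to $\HH_1(\PP^2\setminus B)/\Ind_F$ using the commutative diagram already built in the proof of Theorem~\ref{thm:linking}, and then to recognise the result as the linking set. That proof in fact establishes, for a single curve $C=F+B$, the equality
\begin{equation*}
	\mcL(F,B)=\im(\tilde{i'}_*)=\im\big(\phi\inv\circ\pi\circ i_*\big),
\end{equation*}
where $i_*\colon\HH_1(F\setminus B)\to\HH_1(\PP^2\setminus B)$ is induced by the inclusion and $\pi\colon\HH_1(\PP^2\setminus B)\to\HH_1(\PP^2\setminus B)/\Ind_F$ is the quotient map. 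Applying the isomorphism $\phi$ to both sides gives $\phi(\mcL(F,B))=\im(\pi\circ i_*)$. Now $\pi\circ i_*$ is exactly the homomorphism denoted $\tilde{j}_*$ in Definition~\ref{def:linking_set} (the inclusion $F\setminus B\hookrightarrow\PP^2\setminus B$, followed by reduction modulo $\Ind_F$), and under condition~(\ref{eq:cond2}) the linking set is $\lks{F}=(\pi\circ i_*)\big(\langle [g_1],\dots,[g_l]\rangle\big)$, where $\langle [g_1],\dots,[g_l]\rangle\subseteq\HH_1(F\setminus B)$ denotes the subgroup generated by the classes of the cycles $g_i$. Hence the theorem reduces to the identity
\begin{equation*}
	\im(\pi\circ i_*)=(\pi\circ i_*)\big(\langle [g_1],\dots,[g_l]\rangle\big).
\end{equation*}

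To prove this I would first pin down $\HH_1(F\setminus B)$. Since $F$ is smooth locally at each $P\in F\cap B$, a small disc $\Delta_P\subset F$ around $P$ meets $B$ only at $P$, so $F\setminus B=F\setminus(F\cap B)$ is just $F$ with finitely many smooth points deleted. Excision identifies $\HH_*(F,F\setminus B)$ with $\bigoplus_{P\in F\cap B}\HH_*(\Delta_P,\Delta_P\setminus\{P\})$, and the long exact sequence of the pair $(F,F\setminus B)$ then shows that $\HH_1(F\setminus B)\to\HH_1(F)$ is surjective with kernel generated by the small loops $m_P=\partial\Delta_P\subset F$. Since, by hypothesis, the classes $[g_1],\dots,[g_l]$ map onto a generating system of $\HH_1(F)$, we get $\HH_1(F\setminus B)=\langle [g_1],\dots,[g_l]\rangle+\langle [m_P] : P\in F\cap B\rangle$.

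It then suffices to observe that $\pi\circ i_*$ vanishes on every $m_P$. Indeed, in $\PP^2\setminus B$ the loop $m_P$ bounds the punctured disc $\Delta_P\setminus\{P\}$, so $i_*([m_P])$ is the class written $m_P$ in the proof of Lemma~\ref{lem:meridian}, namely $\sum_{b\in\mcB_P}I_{P,b}\,m_b$, which that lemma identifies with a generator of $\Ind_F$; thus $\pi(i_*([m_P]))=0$. Feeding this into the decomposition of the previous paragraph yields $\im(\pi\circ i_*)=(\pi\circ i_*)\big(\langle [g_1],\dots,[g_l]\rangle\big)$, hence $\phi(\mcL(F,B))=\lks{F}$.

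The diagram chase of the first paragraph is nothing new — it is lifted verbatim from the proof of Theorem~\ref{thm:linking} — and the vanishing in the third paragraph is essentially Lemma~\ref{lem:meridian}. I expect the only delicate point to be the homological bookkeeping in the second paragraph: one has to use the local smoothness of $F$ along $F\cap B$ in an essential way to be sure that $\HH_1(F\setminus B)$ has no generators beyond (lifts of) a basis of $\HH_1(F)$ and the meridians $m_P$, and one has to be careful to read the map $\tilde{j}_*$ of Definition~\ref{def:linking_set} as $\pi\circ i_*$ — landing in the quotient — so that hypotheses~(\ref{eq:cond1}) and~(\ref{eq:cond2}) are invoked precisely where~\cite{GueMei} needs them.
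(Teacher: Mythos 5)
Your proposal is correct and follows essentially the same route as the paper: it identifies $\phi(\mcL(F,B))$ with $\im(\pi\circ i_*)$ via the commutative diagram from the proof of Theorem~\ref{thm:linking}, and then shows this image equals $\lks{F}$ because $\HH_1(F\setminus B)$ is generated by lifts of a basis of $\HH_1(F)$ together with the meridians $m_P$, which Lemma~\ref{lem:meridian} places in the kernel of $\pi$. The only difference is that you spell out (via excision and the long exact sequence of the pair $(F,F\setminus B)$) the generation statement that the paper asserts without proof.
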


\begin{proof}
	In this proof, we use the notation introduced for the construction of the curves linking and we will prove that:
	\begin{equation*}
		\lks{F} = \im(\pi \circ i_*).
	\end{equation*}
	The group $\HH_1(F\setminus B)$ is generated by two kinds of elements. The first are the $g_i$'s which also generate $\HH_1(F)$, the second are the meridians contained in $F$ around the points of $F \cap B$. But Lemma~\ref{lem:meridian} implies that the second elements are in the kernel of the map $\pi$. To conclude, we use the diagram given in the proof of Theorem~\ref{thm:linking}.
\end{proof}

\section{Splitting numbers}\label{sec:splitting}

In this section, we recall the construction of the splitting numbers introduced by the second author in~\cite{Shi}. Then, we relate this invariant with the curves linking defined in Section~\ref{sec:linking}.

\subsection{The splitting numbers}\mbox{}

Let $\Psi:X\to \PP^2$ be a cyclic cover of degree $m$ branched along $B$ given by a surjection $\theta:\pi_1(\PP^2\setminus B)\twoheadrightarrow\ZZ/m\ZZ$. Let $B_{\theta}$ be the following divisor on $\PP^2$ 
\[ 
	B_{\theta}=\sum_{i=1}^{m-1}i\cdot B_i, 
\]
where $B_i$ is the sum of irreducible components of $B$ whose meridians are sent to $[i]\in\ZZ/m\ZZ$ by $\theta$. Note that the degree of $B_{\theta}$ is divided by $m$ since $\Psi$ is of degree $m$, say $\deg B_{\theta}=mn$. 

If $C=F+B$ is such that $F$ is an irreducible component, we define the \textit{splitting number} $s_{\Psi}(F)$ of $F$ for $\Psi$ as the number of irreducible components of $\Psi^{\ast}F$. In the following theorem, we give a way to compute it.

\begin{thm}\label{thm:splittingnumber}
	For a smooth curve $F\subset\PP^2$ which is not a component of $B$, the followings are equivalent: 
	\begin{enumerate}
		\item $s_{\Psi}(F)=\nu$; 
		\item $\nu$ is the maximal divisor of $m$ such that there exist a divisor $D$ on $\PP^2$ with $\nu D|_F= B_{\theta}|_F$. 
	\end{enumerate}
\end{thm}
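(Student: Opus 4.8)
The plan is to translate the statement about the number of irreducible components of $\Psi^{\ast}F$ into a divisibility condition on the restriction $B_\theta|_F$. The key point is that $F\cong\PP^1$ (being smooth of some degree, but more importantly a smooth rational curve once we intersect with the covering construction — actually $F$ need only be smooth, but what matters is that $\Pic(F)$ together with the ramification data controls the splitting), and that the cyclic cover $\Psi$ restricted over $F$ is itself a cyclic cover of $F$ branched along $B_\theta|_F$, governed by the class of $B_\theta|_F$ in a suitable quotient of $\Pic(F)$.

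\medskip

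First I would recall the standard description of the cyclic cover: $\Psi\colon X\to\PP^2$ of degree $m$ branched along $B$ corresponds to a line bundle $\mcL$ on $\PP^2$ with $\mcL^{\otimes m}=\mcO_{\PP^2}(B_\theta)$, i.e. $X=\operatorname{Spec}\bigl(\bigoplus_{i=0}^{m-1}\mcL^{-i}(\lfloor iB_\theta/m\rfloor)\bigr)$ or the analogous construction; since $\Pic(\PP^2)=\ZZ$ is torsion-free, such $\mcL$ is $\mcO_{\PP^2}(n)$ where $mn=\deg B_\theta$. I would then restrict everything to $F$: the pullback $\Psi^{\ast}F\to F$ is the cyclic cover of $F$ of degree $m$ associated with the line bundle $\mcL|_F$ and the divisor $B_\theta|_F$, so $\Psi^{\ast}F$ is determined by the pair $(\mcL|_F, B_\theta|_F)$ with $(\mcL|_F)^{\otimes m}=\mcO_F(B_\theta|_F)$.

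\medskip

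Next I would invoke the general fact that a degree-$m$ cyclic cover $Y\to F$ of an irreducible curve, given by $(\mcM,D)$ with $\mcM^{\otimes m}\cong\mcO_F(D)$, decomposes into exactly $\nu$ irreducible components, where $\nu$ is the largest divisor of $m$ for which the cover ``descends'' through the intermediate $\ZZ/(m/\nu)\ZZ$-cover, equivalently the largest $\nu\mid m$ such that $\mcM^{\otimes(m/\nu)}\cong\mcO_F(D')$ for some effective (or just some) divisor $D'$ with $\nu D' = D$ on the level of divisors — precisely condition (2). The forward implication ($s_\Psi(F)=\nu\Rightarrow$ (2)) comes from factoring $\Psi^{\ast}F\to F$ through the connected cyclic cover of degree $m/\nu$: the $\nu$ components are a single $\mu_\nu$-orbit, and the ramification divisor of the degree-$(m/\nu)$ sub-cover is $B_\theta|_F/\nu$ up to $m/\nu$-th powers, giving the divisor $D$. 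The converse is the reverse reading of the same factorization: if $\nu D|_F=B_\theta|_F$ then the $(m/\nu)$-cover defined by $(\mcL^{\otimes?}|_F, D|_F)$ pulls back, so $\Psi^{\ast}F$ has at least $\nu$ components, and maximality of $\nu$ forces equality.

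\medskip

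The main obstacle I anticipate is making precise the passage between ``cyclic covers of $F$'' and ``divisors $D$ on $\PP^2$ with $\nu D|_F=B_\theta|_F$'': one must be careful that the divisor realizing the splitting is required to live on $\PP^2$ (not merely on $F$), which is where $\Pic(\PP^2)=\ZZ$ and the hypothesis that $F$ is \emph{not} a component of $B$ enter — the latter guarantees $B_\theta|_F$ is a genuine effective divisor on $F$ of degree $\deg B_\theta$, and the former guarantees every line bundle on $\PP^2$ is $\mcO_{\PP^2}(k)$ so the ``descended'' cover is automatically realized by an honest divisor on $\PP^2$, not just on $F$. I would also need to check the torsion-freeness of $\Pic(\PP^2)$ is used correctly so that the divisibility ``$\nu\mid m$, $\nu D|_F=B_\theta|_F$'' is equivalent to the numerical/line-bundle statement $(\mcL|_F)^{\otimes(m/\nu)}\cong\mcO_F(D|_F)$; this is the technical heart and the rest is bookkeeping with the $\mu_m$-action on $\Psi^{\ast}F$.
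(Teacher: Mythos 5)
Your route is genuinely different from the paper's. The paper never restricts the covering data directly to $F$: it blows up the points of $F\cap B$, uses Lemma~\ref{lem:multiplicity} to twist the covering data by $\sum_P\left[\frac{I_P}{m}\right]D_P$ so that the induced cover $\hat{\Psi}$ becomes (essentially) unramified over the strict transform $\tilde{F}$, compares $X$ and $\hat{X}$ via Stein factorization, and then counts the components of an \emph{unramified} simple cyclic cover of $\tilde{F}$ through the order of $[\mcF|_{\tilde F}]$ in $\Pic^0(\tilde F)$, quoting \cite[Remark~2.3, Lemma~2.6]{Shi}. Your plan -- restrict the data $(\mcO_{\PP^2}(n),B_\theta)$ to $F$, read the splitting off the largest $e\mid m$ for which $f_{B_\theta}|_F$ is an $e$-th power of a section of $\mcO_F(mn/e)$, and lift the resulting divisor from $F$ to $\PP^2$ using surjectivity of $H^0(\PP^2,\mcO_{\PP^2}(d))\to H^0(F,\mcO_F(d))$ together with $\Pic(\PP^2)=\ZZ$ -- is more elementary and self-contained; what the paper's heavier construction buys is that $\hat{\Psi}$, $\hat{X}$ and Proposition~\ref{propo:splitting_components} are reused verbatim in the proof of Theorem~\ref{thm:equivalence}, so the blow-up model is not wasted effort there.

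There are, however, two real gaps. First, the assertion that ``the pullback $\Psi^{\ast}F\to F$ is the cyclic cover of $F$ associated with $(\mcL|_F,B_\theta|_F)$'' is false as stated: that cover is $X'\times_{\PP^2}F=\{t^m=f_{B_\theta}|_F\}$, whereas $\Psi^{\ast}F$ lives on the normalization $X$ of $X'$, and normalizing can in general separate branches. What you actually need is only that the two curves have the same number of irreducible components, and this requires a (short) argument: the non-normal locus of $X'$ lies over $\Sing(B_\theta)$ (including the multiple components of $B_\theta$), and since $F$ is not a component of $B$ it meets $(\Psi')^{-1}(F)$ in finitely many points; hence $\varphi:X\to X'$ is an isomorphism over the generic point of every component of $(\Psi')^{-1}(F)$, which gives the bijection on components. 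This is precisely the issue the paper's blow-up/Stein-factorization detour is designed to control, so it cannot be waved away. Second, your ``general fact'' is the crux and is only asserted: it plays the role of \cite[Lemma~2.6]{Shi} in the paper's proof. It does hold and can be proved by Kummer theory over $\CC(F)$ -- the number of irreducible factors of $T^m-g$ is the largest $e\mid m$ with $g\in(\CC(F)^{\ast})^e$, all factors having equal degree since $\mu_m\subset\CC(F)$, and an $e$-th root, having divisor $\tfrac1e B_\theta|_F\geq 0$, is automatically a regular section of $\mcO_F(mn/e)$ -- but this needs to be written out, including the translation between rational functions and sections. The opening claim that $F\cong\PP^1$ is wrong (in the application $F$ is a smooth cubic), but you retract it and nothing depends on it.
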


In order to prove this theorem, we use the notation $\sigma:\hat{\PP^2}\to\PP^2$, $\tilde{F}$, $\tilde{B}$, $D_P$ and so on defined in Section~\ref{sec:linking}, and we need the following lemma:

\begin{lem}\label{lem:multiplicity}
	The multiplicity of $D_P$ in $\sigma^\ast ( B_{\theta})$ is $I_P$ for each $P\in F\cap B$, where $I_P$ is the intersection multiplicity of $F$ and $B_{\theta}$ at $P$. 
\end{lem}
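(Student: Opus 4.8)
The plan is to analyze $\sigma^\ast(B_\theta)$ locally near each point $P\in F\cap B$ and to track how the blowing-ups distribute the pullback multiplicities, singling out the component $D_P$ that meets $\tilde F$. First I would recall that $B_\theta=\sum_{i=1}^{m-1} i\cdot B_i$ is a $\ZZ$-divisor (not merely a reduced one), so $\sigma^\ast$ here means the total transform as a divisor, and that the blowing-ups in $\sigma$ are performed only over points of $F\cap B$ and only finitely many times, in such a way that the strict transform of $C=F+B$ meets the exceptional locus transversally. Near a fixed $P$, I would choose local coordinates so that $F$ is the smooth branch $\{y=0\}$ (using the local smoothness of $F$ around $P\in F\cap B$), and write the local equation of $B_\theta$ at $P$ as a product of local equations of its branches weighted by the appropriate $i$'s; by definition the local intersection number $I_P$ of $F$ and $B_\theta$ at $P$ is the order of vanishing in $x$ of the restriction of this local equation to $\{y=0\}$.

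The key step is a local computation on the chart of $\hat{\PP^2}$ containing the point $\tilde F\cap D_P$. I would argue by induction on the number of blowing-ups used over $P$. After the first blow-up at $P$ with exceptional curve $E$, on the chart that follows the strict transform $\tilde F$ the exceptional divisor $E$ is cut out by a coordinate, say $x$, and the multiplicity of $E$ in the total transform $\sigma_1^\ast(B_\theta)$ equals the multiplicity $m_P(B_\theta)$ of $B_\theta$ at $P$, which is exactly the sum over local branches $b$ of $B$ at $P$ of $i(b)\cdot\mathrm{mult}_P(b)$, where $i(b)$ is the weight of the component of $B$ carrying $b$. Meanwhile the strict transform of $B_\theta$ may still meet $\tilde F$, and the remaining intersection multiplicity of $\tilde F$ with the strict transform of $B_\theta$ on this chart is $I_P-m_P(B_\theta)$, by the standard behaviour of intersection multiplicity under blow-up (using that $F$ is smooth at $P$, so $\mathrm{mult}_P(F)=1$). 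Continuing the blow-ups over the successive points where $\tilde F$ still meets the transform of $B_\theta$, each step peels off a further chunk of the intersection multiplicity and adds it to the multiplicity of the exceptional curve along $\tilde F$; the bookkeeping is exactly that of the Euclidean-type resolution, and when $\tilde F$ finally meets the exceptional locus transversally and is disjoint from the strict transform of $B_\theta$, the exceptional component through $\tilde F\cap\tilde{(\text{stuff})}$ — which is by definition $D_P$ — carries total multiplicity equal to the sum of all the pieces peeled off, namely $I_P$. I would phrase this cleanly as: the multiplicity of $D_P$ in $\sigma^\ast(B_\theta)$ equals the total intersection multiplicity $(\tilde F\cdot \sigma^\ast(B_\theta))$ concentrated over $P$, and since $\sigma^\ast$ preserves intersection numbers with curves, $(\tilde F\cdot\sigma^\ast(B_\theta))=(F\cdot B_\theta)_P=I_P$ locally, while $\tilde F\cap\hat B=\emptyset$ forces all of this intersection to sit on $D_P$, which meets $\tilde F$ transversally in a single point; hence the coefficient of $D_P$ is $I_P$.

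The main obstacle I expect is making the inductive step fully rigorous for an arbitrary, possibly singular, configuration of branches of $B$ at $P$: one must be careful that the center of each successive blow-up in the chosen chain is precisely the (unique, by smoothness of $F$) point where $\tilde F$ still meets the transform of $B$, that no extra blow-ups contributing to $\hat B$ interfere with the multiplicity count along $\tilde F$, and that ``the'' component $D_P$ is well-defined as stated in Section~\ref{sec:linking}. The cleanest way around this is to avoid the explicit chart induction and instead use the intersection-theoretic identity: for any curve $E$ on $\hat{\PP^2}$, the coefficient of $E$ in $\sigma^\ast(B_\theta)$ can be read off by intersecting $\sigma^\ast(B_\theta)$ with a small transversal, and choosing the transversal to be (a germ of) $\tilde F$ at the point $\tilde F\cap D_P$ gives, by the projection formula and $\tilde F\cap\hat B=\emptyset$, exactly the coefficient of $D_P$ times $(\tilde F\cdot D_P)=1$ on the left and $(\sigma_\ast\tilde F\cdot B_\theta)_P=(F\cdot B_\theta)_P=I_P$ on the right. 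This reduces the lemma to the definition of $I_P$ and the transversality built into the construction of $\sigma$.
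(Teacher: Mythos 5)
Your ``cleanest way'' is essentially the paper's own proof: the paper likewise applies the projection formula together with the transversality of $\tilde{F}$ and $D_P$ and the fact that $\tilde{F}$ meets no other component of $\sigma^\ast(B_\theta)$ over $P$, writing the computation globally as $n_P=\sigma^\ast(B_\theta)\cdot\tilde{F}-\sum_{Q\neq P}I_Q=B_\theta\cdot F-\sum_{Q\neq P}I_Q=I_P$, whereas you read off the same identity locally at the unique point of $\tilde{F}\cap D_P$. The preliminary chart-by-chart Euclidean induction you sketch would also work, but, as you note yourself, it is not needed once the projection-formula argument is in place.
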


\begin{proof}
	Let $\sigma_P:\hat{\PP^2}_P \rightarrow \PP^2$ be the blow-up in $\sigma$ over $P$. We denote $\tilde{F_P}$ the strict transform of $F$ by $\sigma_P$. Let $D'_P$ be the irreducible component of $\sigma_P^{\ast} (B_{\theta})$ which intersects with $\tilde{F_P}$. Let $n_P$ be the multiplicity of $D'_P$ in $\sigma_P^\ast (B_{\theta})$. It is sufficient to prove $n_P=I_P$ since the multiplicity of $D_P$ in $\sigma^{\ast}(B_{\theta})$ is equal to $n_P$. By the projection formula, we have:
	\begin{align*}
		n_P & = \sigma_P^\ast( B_{\theta}) \cdot \tilde{F_P} - \sum\limits_{Q\neq P} I_Q \\
		& =  B_{\theta}\cdot F - \sum\limits_{Q\neq P}I_Q \\
		& = I_P		
	 \qedhere
	\end{align*}
\end{proof}

\begin{proof}[Proof of Theorem~\ref{thm:splittingnumber}]
	It is enough to prove that the condition (2) implies the condition (1). Suppose the condition (2). Let $p_n:\mcT_{\mcO(n)}\to\PP^2$ be the line bundle associated to $\mcO_{\PP^2}(n)$, and let $X'$ be the subvariety of $\mcT_{\mcO(n)}$ defined by $t^m=p_n^{\ast}f_{ B_{\theta}}$, where $t\in H^0(\mcT_{\mcO(n)},p_n^{\ast}\mcO_{\PP^2}(n))$ is a tautological section, and $f_{ B_{\theta}}\in H^0(\PP^2,\mcO_{\PP^2}(mn))$ is a global section defining $ B_{\theta}$. Then $X$ is the normalization of $X'$, and $X$ and $X'$ are isomorphic over $\PP^2\setminus B$. Let $\varphi:X\to X'$ denote the normalization, and let $\Psi'$ be the restriction of $p_n$ to $X'$. Note that $\varphi$ satisfies $\Psi=\Psi'\circ\varphi$. 

	 We may assume that each irreducible component of $\sigma^{-1}(B)$ intersects with $\tilde{F}$ at most one point after more blowing-ups if necessary. Let $\sigma_F:\tilde{F}\to F$ be the restriction of $\sigma$ to $\tilde{F}$. For $P\in F$, let $\tilde{P}$ denote the point $\sigma_F^{\ast}(P)$ on $\tilde{F}$. By Lemma~\ref{lem:multiplicity}, we have $\mathrm{I}_P( B_{\theta},F)=\mathrm{I}_{\tilde{P}}(\sigma^{\ast} B_{\theta},\tilde{F})$ for $P\in F \cap B$, where $\mathrm{I}_P(B,F)$ is the local intersection number of the two divisors $B$ and $F$ at $P$. Let $\mcF$ be the following invertible sheaf on $\hat{\PP^2}$: 
	\[ 
		\mcF:=\mcO_{\hat{\PP^2}}\left(\sigma^{\ast}(n L)-\sum_{P\in F \cap B}\left[\frac{I_P}{m}\right] D_P\right), 
	\]
	where $L$ is a line on $\PP^2$, $I_P=\mathrm{I}_P( B_{\theta},F)$, and $[r]$ is the integer not beyond $r$ for a real number $r$. Let $p_{\mcF}:\mcT_{\mcF}\to\hat{\PP^2}$ be the line bundle associated to $\mcF$, and let $\hat{X}'$ be the subvariety of $\mcT_{\mcF}$ defined by $t_{\mcF}^m=p_{\mcF}^{\ast}f_{\hat{B}_{\theta}}$, where  $t_{\mcF}\in H^0(\mcT_{\mcF},p_{\mcF}^{\ast}\mcF)$ is a tautological section, and $f_{\hat{B}_{\theta}}\in H^0(\hat{\PP^2},\mcO(\hat{B}_{\theta}))$ is a section defining the following divisor $\hat{B}_{\theta}$ on $\hat{\PP^2}$: 
	\[ 
		\hat{B}_{\theta}:=\sigma^{\ast}( B_{\theta})-\sum_{P\in B\cap C}\left[\frac{I_P}{m}\right] m D_P. 
	\]
	We have $\mcF^{\otimes m}\cong \mcO_{\hat{\PP^2}}(\hat{B}_{\theta})$, and 
	\[ 
		\left(\mcF|_{\tilde{F}}\right)^{\otimes m}\cong \mcO_{\tilde{F}}\left(\sum_{P\in F \cap B}\left(I_P-\left[\frac{I_P}{m}\right]m\right)\tilde{P}\right). 
	\]
	Let $\hat{\varphi}:\hat{X}\to\hat{X}'$ be the normalization of $\hat{X}'$. Let $\hat{\Psi}':\hat{X}'\to \hat{\PP^2}$ be the restriction of $p_{\mcF}$ to $\hat{X}'$, 
	and let $\hat{\Psi}:\hat{X}\to\hat{\PP^2}$ be the composition $\hat{\Psi}'\circ\hat{\varphi}$. Then $\hat{\Psi}:\hat{X}\to\hat{\PP^2}$ is a cyclic cover of degree $m$ whose branch locus is the sum of irreducible components of $\sigma^{\ast}(B)$ except $D_P$ with $I_P\equiv 0 \pmod{m}$. By Stein factorization, we have a birational morphism $\tilde{\sigma}:\hat{X}\to X$ with $\Psi\circ\tilde{\sigma}=\sigma\circ\hat{\Psi}$. 
		\begin{equation*}
			\begin{tikzcd}[row sep=small]
				& X \arrow{ld}[swap]{\varphi} \arrow{lddd}{\Psi} & & \hat{X} \arrow{ld}[swap]{\hat{\varphi}} \arrow{ll}[swap]{\tilde{\sigma}} \arrow{lddd}{\hat{\Psi}} \\
				X' \arrow{dd}[swap]{\Psi'} & & \hat{X}' \arrow{dd}[swap]{\hat{\Psi}'}
				\\ \\ 
				\PP^2 & & \hat{\PP^2} \arrow{ll}{\sigma}
			\end{tikzcd}
		\end{equation*}
	Since the birational map $\tilde{\sigma}\circ\hat{\varphi}^{-1}$ gives an isomorphism from $\hat{X}'\setminus(\sigma\circ\hat{\Psi}')^{-1}(B)$ to $X\setminus\Psi^{-1}(B)$, 
	$\tilde{\sigma}\circ\hat{\varphi}^{-1}$ gives a one to one correspondence between the sets of irreducible components of $\Psi^{\ast}(F)$ and $(\hat{\Psi}')^{\ast}(\tilde{F})$ respectively. By \cite[Remark~2.3]{Shi}, to compute the number of the irreducible components of $(\hat{\Psi}')^{\ast}(\tilde{F})$, we may assume that $\hat{\Psi}'$ is essentially unramified over $\hat{F}$, i.e. $I_P \equiv 0 \pmod{m}$ for any $P\in F \cap B$. Hence we have $(\mcF|_{\tilde{F}})^{\otimes m}\cong\mcO_{\tilde{F}}$. Then the restriction of $\hat{\Psi}'$ to $(\hat{\Psi}')^{-1}(\tilde{F})$ is a unramified simple cyclic cover over $\tilde{F}$. The condition (2) implies that the order of $[\mcF|_{\tilde{F}}]\in\Pic^0(\tilde{F})$ is equal to $m/\nu$. By \cite[Lemma~2.6]{Shi}, the number of irreducible components of $(\hat{\Psi}')^{\ast}(\tilde{F})$ is equal to $\nu$. Therefore, we obtain $s_{\Psi}(F)=\nu$. 
\end{proof}

From the proof of Theorem~\ref{thm:splittingnumber}, we obtain the following Proposition. 

\begin{propo}\label{propo:splitting_components}
	The splitting number $s_\Psi(F)$ is equal to the number of connected components of~$\hat{\Psi}^*\tilde{F}$.
\end{propo}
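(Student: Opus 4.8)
The plan is to extract the statement directly from the construction carried out in the proof of Theorem~\ref{thm:splittingnumber}, since essentially all the geometric work has already been done there. Recall that in that proof we built a cyclic cover $\hat{\Psi}:\hat{X}\to\hat{\PP^2}$ of degree $m$ together with a birational morphism $\tilde{\sigma}:\hat{X}\to X$ satisfying $\Psi\circ\tilde{\sigma}=\sigma\circ\hat{\Psi}$, and we observed that $\tilde{\sigma}$ restricts to an isomorphism away from the preimage of $B$, hence induces a bijection between the irreducible components of $\Psi^{\ast}F$ and those of $\hat{\Psi}^{\ast}\tilde{F}$ (via the intermediate $(\hat{\Psi}')^{\ast}\tilde{F}$). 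So the first step is simply to record that $s_\Psi(F) = \#\{\text{irreducible components of }\hat{\Psi}^{\ast}\tilde{F}\}$.

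The second step is to argue that for $\hat{\Psi}^{\ast}\tilde{F}$, the number of irreducible components coincides with the number of connected components. The key point is that, after the additional blowing-ups performed in the proof (so that each irreducible component of $\sigma\inv(B)$ meets $\tilde{F}$ in at most one point and the cover is essentially unramified over $\hat{F}$), the restriction of $\hat{\Psi}$ to $\hat{\Psi}\inv(\tilde{F})$ is an \emph{unramified} simple cyclic cover of the smooth curve $\tilde{F}$. An unramified finite cover of a smooth irreducible (hence connected) curve has all its connected components smooth and irreducible, and the covering map restricted to each is again a connected unramified cover; thus connected components and irreducible components of $\hat{\Psi}^{\ast}\tilde{F}$ are in natural bijection. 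This is really the crux, and the only thing to be careful about: one must use that $\tilde{F}$ is smooth and irreducible (it is, being the strict transform under blow-ups of the smooth curve $F$, which is irreducible as a component of $C$) so that a fiber-wise étale cover cannot have a connected component that is itself reducible.

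Finally, I would note that passing from $\hat{\Psi}^{\ast}\tilde{F}$ as a divisor on $\hat{X}$ to $\hat{\Psi}^{\ast}\tilde{F}$ as a variety, the reduced structure has the same connected components, and the bijection with components of $\Psi^{\ast}F$ from Step~1 respects connectedness because $\tilde{\sigma}$ is a morphism (so it sends connected sets to connected sets, and being birational and an isomorphism over the complement of $B$, it does not merge components). Putting the three steps together gives $s_\Psi(F) = \#\{\text{connected components of }\hat{\Psi}^{\ast}\tilde{F}\}$, as claimed. The main obstacle is the second step — making precise why \emph{unramifiedness} over a smooth irreducible base forces irreducible components to coincide with connected components — but this is a standard fact about étale covers and should only require a sentence or two, citing the simple cyclic cover description already invoked from \cite{Shi} in the proof of Theorem~\ref{thm:splittingnumber}.
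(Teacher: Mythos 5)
Your proposal is correct and takes essentially the same route as the paper, which offers no separate argument but extracts the proposition from the proof of Theorem~\ref{thm:splittingnumber}: the bijection between the irreducible components of $\Psi^{\ast}F$ and those of $(\hat{\Psi}')^{\ast}\tilde{F}$, together with the fact that over $\tilde{F}$ the cover is an unramified simple cyclic cover of a smooth irreducible curve, so connected components coincide with irreducible components. One small inaccuracy to fix in your wording: the unramifiedness over $\tilde{F}$ is not produced by the additional blow-ups but by the condition $I_P\equiv 0\pmod{m}$ for all $P\in F\cap B$ (the reduction of \cite[Remark~2.3]{Shi}, automatic in the situation where the proposition is applied, since there $m$ is the gcd of the $I_P$); this does not change the structure of your argument.
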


\subsection{Relation between splitting numbers and curves linking}\mbox{}

In this subsection, we assume that $C=F+B$ is a plane curve such that $F$ is irreducible and $B$ is smooth. With such hypotheses, we have $\pi_1(\PP^2\setminus B)\cong\HH_1(\PP^2\setminus B)$, and there exists $m\in\NN^*$ such that $\HH_1({\PP^2}\setminus{B})/\Ind_F\simeq \ZZ_m$. Note that $m$ is equal to the $\gcd$ of all $I_P$ for $P\in B\cap F$ by Lemma~\ref{lem:meridian}. Furthermore, the curve $C$ verifies the conditions~\ref{eq:cond1} and~\ref{eq:cond2}, but also the condition needed for the definition of the splitting numbers.

Let $\Psi:X\to\PP^2$ be the $\ZZ_m$-cover branched at $B$, and let $\hat{\Psi}:\hat{X}\to\hat{\PP^2}$ be as in the proof of Theorem~\ref{thm:splittingnumber}. We denote by $\Phi:U\to\hat{\PP^2}\setminus\hat{B}$ the restriction of $\hat{\Psi}$ to $U:=\hat{\Psi}^{-1}(\hat{\PP^2}\setminus\hat{B})$. Note that $\Phi$ is unramified since $I_P\equiv0\pmod{m}$ for any $P\in B\cap F$. 
Since $\pi_1(\hat{\PP^2}\setminus\hat{B})$ is a quotient of $\pi_1(\PP^2\setminus B)$, $\pi_1(\hat{\PP^2}\setminus \hat{B})$ is abelian. Hence we obtain $\pi_1(\hat{\PP^2}\setminus\hat{B})\cong\ZZ_m$ since $\HH_1(\hat{\PP^2}\setminus\hat{B})\cong\HH_1(\PP^2\setminus B)/\Ind_F$. In particular, $\Phi$ is the universal cover of $\hat{\PP^2}\setminus\hat{B}$. 
Let $\Aut_{\Phi}(U)$ be the subgroup of automorphisms of $U$ which are compatible with $\Phi$, i.e. $\Aut_{\Phi}(U)=\{\rho\in\Aut(U)\mid \Phi\circ\rho=\Phi\}$. 
It is well-known that $\pi_1(\hat{\PP^2}\setminus\hat{B})$ and $\Aut_{\Phi}(U)$ are isomorphic. This isomorphism can be describe as follows. Let $c$ be an element of $\pi_1(\hat{\PP^2}\setminus\hat{B})$, $c$ is uniquely lifted in a path $\lambda_c$ of $U$ such that $\lambda_c(0)=x$. There is a unique element $\rho_c\in\Aut_{\Phi}(U)$ such that $\rho_c(x) = \lambda_c(1)$. The application $\mathcal{E}:\pi_1(\hat{\PP^2}\setminus\hat{B})\rightarrow \Aut_{\Phi}(U)$ defined by $\mathcal{E}(c)=\rho_c$ is this isomorphism.

\begin{lem}\label{lem:stab}
	Let $i$ be the inclusion of $\tilde{F}$ in $\hat{\PP^2}\setminus\hat{B}$. If $i_*$ is the induced application on the fundamental group, then:
	\begin{equation*}
		\Stab \big(\Phi^*\tilde{F}\big)_0  \simeq i_*(\pi_1(\tilde{F},p)).
	\end{equation*}
\end{lem}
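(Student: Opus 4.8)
The plan is to identify both sides of the claimed isomorphism inside $\Aut_{\Phi}(U)\cong\pi_1(\hat{\PP^2}\setminus\hat{B})$ and show they coincide as subgroups. Write $\Phi^*\tilde{F}$ for the (set-theoretic) preimage $\Phi^{-1}(\tilde{F})$, and let $(\Phi^*\tilde{F})_0$ be the connected component containing the chosen base-point lift $p$. The stabilizer $\Stab(\Phi^*\tilde{F})_0$ is the subgroup of $\rho\in\Aut_{\Phi}(U)$ with $\rho\big((\Phi^*\tilde{F})_0\big)=(\Phi^*\tilde{F})_0$; transporting through $\mathcal{E}$ this corresponds to a subgroup of $\pi_1(\hat{\PP^2}\setminus\hat{B})$, and the goal is to recognise it as $i_*(\pi_1(\tilde{F},p))$.

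First I would recall the standard covering-space dictionary: for a connected, locally path-connected space $Y$ with universal cover $q:\tilde{Y}\to Y$ and a path-connected subspace $A\subset Y$ with a point $a\in A$, the component of $q^{-1}(A)$ through a chosen lift $\tilde{a}$ is itself a covering of $A$, and the monodromy/deck description gives that a deck transformation $\rho_c$ (for $c\in\pi_1(Y,a)$) preserves that component if and only if $c$ lies in the image of $\pi_1(A,a)\to\pi_1(Y,a)$. Concretely: $\rho_c$ sends $\tilde{a}$ to the endpoint of the lift of $c$ starting at $\tilde{a}$; that endpoint lies in the same component of $q^{-1}(A)$ as $\tilde{a}$ exactly when $c$ is homotopic in $Y$ to a loop in $A$, i.e. $c\in i_*\pi_1(A,a)$. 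Applying this with $Y=\hat{\PP^2}\setminus\hat{B}$ (whose universal cover is $\Phi:U\to\hat{\PP^2}\setminus\hat{B}$, as established just above the lemma), $A=\tilde{F}$, and $a=p$, yields $\mathcal{E}^{-1}\big(\Stab(\Phi^*\tilde{F})_0\big)=i_*(\pi_1(\tilde{F},p))$, hence the asserted isomorphism. Two small hypotheses need checking along the way: that $\tilde{F}$ is path-connected (it is, being the strict transform of the irreducible curve $F$), and that $\tilde{F}$ meets $U$ — equivalently $\tilde{F}\subset\hat{\PP^2}\setminus\hat{B}$, which is exactly the inclusion $i$ of the statement and was noted by construction since $\tilde{F}\cap\hat{B}=\emptyset$.

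The one genuinely delicate point — and the step I expect to be the main obstacle — is the first half of the dictionary: namely that the \emph{component} $(\Phi^*\tilde{F})_0$, rather than all of $\Phi^{-1}(\tilde{F})$, is the right object, and that $\rho_c$ stabilising the whole preimage is automatic while stabilising the base component is the restrictive condition. This rests on $\tilde{F}$ being locally path-connected (true, as it is a complex algebraic curve, so its analytification is locally contractible) so that $\Phi^{-1}(\tilde{F})\to\tilde{F}$ is genuinely a covering map and its components are in bijection with cosets $i_*\pi_1(\tilde{F},p)\backslash\pi_1(\hat{\PP^2}\setminus\hat{B})$; and on the fact that $U$ is path-connected, so the deck group acts transitively on the fibre and thus permutes these components simply transitively. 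Once this covering-theoretic fact is in place, the identification of the stabilizer is the textbook correspondence and the conclusion follows immediately. I would therefore spend most of the written proof on the covering-space setup for $\Phi^{-1}(\tilde{F})$ and the permutation action of $\Aut_{\Phi}(U)$ on its components, then invoke $\mathcal{E}$ to translate back to $\pi_1$.
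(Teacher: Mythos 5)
Your proposal is correct and follows essentially the same route as the paper: the paper's proof is just the covering-space dictionary written out by hand, showing one inclusion by lifting a loop contained in $\tilde{F}$ into the connected component $\big(\Phi^*\tilde{F}\big)_0$, and the other by joining $x$ to $\rho(x)$ by a path inside that component and projecting it to a loop in $\tilde{F}$, then transporting everything through the isomorphism $\mathcal{E}$ with the deck group of the universal cover $\Phi$. One peripheral slip only: the deck group permutes the components of $\Phi^{-1}(\tilde{F})$ transitively, not simply transitively (the stabilizer of the base component is exactly $\mathcal{E}\big(i_*\pi_1(\tilde{F},p)\big)$, which need not be trivial), but this does not affect your main identification.
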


\begin{proof}
	Let $c\in\pi_1(\hat{\PP^2}\setminus\hat{B},p)$ such that $c \subset \tilde{F}$. By connection, $\lambda_c$ is contained in $\big(\Phi^*\tilde{F}\big)_0$, then $\lambda_c(1) \in \big(\Phi^*\tilde{F}\big)_0$ and $\rho_c(x) \in \big(\Phi^*\tilde{F}\big)_0$. Since automorphisms in $\Aut_\Phi(U)$ fix globally $\Phi^*\tilde{F}$, if $\rho_c\big(\big(\Phi^*\tilde{F}\big)_0\big) \cap \big(\Phi^*\tilde{F}\big)_0 \neq \emptyset$, then $\rho_c\big(\big(\Phi^*\tilde{F}\big)_0\big) = \big(\Phi^*\tilde{F}\big)_0$ and $\rho_c \in \Stab \big(\Phi^*\tilde{F}\big)_0$. Thus, we have: 
	\begin{equation*}
		\mathcal{E}\circ i_*(\pi_1(\tilde{F},p)) \subset \Stab \big(\Phi^*\tilde{F}\big)_0.
	\end{equation*}

	Let $\rho\in\Stab \big(\Phi^*\tilde{F}\big)_0$ then $\rho(x)\in \big(\Phi^*\tilde{F}\big)_0$. Since $\big(\Phi^*\tilde{F}\big)_0$ is connected then it exists a path $\lambda\subset \big(\Phi^*\tilde{F}\big)_0$ such that $\lambda(0)=x$ and $\lambda(1)=\rho(x)$. We have then $\rho=\mathcal{E}\circ\Phi(\lambda)$. This implies that:
	\begin{equation*}
		\Stab \big(\Phi^*\tilde{F}\big)_0 \subset \mathcal{E}\circ i_*(\pi_1(\tilde{F},p)).
	\end{equation*}
	
	By double inclusion, we obtain that $\im_\mathcal{E}(i_*(\pi_1(\tilde{F},p))) = \Stab \big(\Phi^*\tilde{F}\big)_0$. Since $\mathcal{E}$ is an isomorphism, then the result holds.
\end{proof}

From this, we can deduce the following theorem:

\begin{thm}\label{thm:equivalence}
	Let $C=F+B$ be a curve such that $B$ and $F$ are smooth irreducible components of degree $b$ and $f$ respectively, with $b\neq f$. Then the following equation holds:
	\begin{equation*}
		s_\Psi(F) = \left[ B : F \right]_\mcL.
	\end{equation*}
\end{thm}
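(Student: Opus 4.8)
The plan is to identify both quantities with the index of a single subgroup of $\HH_1(\hat{\PP^2}\setminus\hat{B})\cong\ZZ_m$, where $m$ is the common cyclic order discussed just before the statement. On the one hand, by Definition~\ref{def:linking}(2), $[B:F]_\mcL$ is exactly the index $[\HH_1(\hat{\PP^2}\setminus\hat{B}):\mcL(F,B)]$, and $\mcL(F,B)=\im(\tilde{i'}_*:\HH_1(\tilde{F})\to\HH_1(\hat{\PP^2}\setminus\hat{B}))$. On the other hand, by Proposition~\ref{propo:splitting_components}, $s_\Psi(F)$ equals the number of connected components of $\hat{\Psi}^*\tilde{F}$, equivalently the number of connected components of $\Phi^*\tilde{F}$ since $\Phi$ is the restriction of $\hat{\Psi}$ to the complement of $\hat{B}$ and $\tilde{F}\cap\hat{B}=\emptyset$ (here the hypothesis $b\neq f$, or rather $I_P\equiv0\pmod m$, guarantees $\Phi$ is unramified along $\tilde F$, so the fiber structure over $\tilde F$ is genuinely a covering). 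So the first step is to reduce the theorem to the identity
\begin{equation*}
	\#\{\text{connected components of }\Phi^*\tilde{F}\} = \big[\HH_1(\hat{\PP^2}\setminus\hat{B}) : \mcL(F,B)\big].
\end{equation*}

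The second step is the standard covering-space dictionary. Since $\Phi:U\to\hat{\PP^2}\setminus\hat{B}$ is the universal (Galois) cover with deck group $\Aut_\Phi(U)\cong\pi_1(\hat{\PP^2}\setminus\hat{B})\cong\ZZ_m$, the connected components of $\Phi^*\tilde{F}$ are permuted transitively by the deck group, and the stabilizer of the base component $(\Phi^*\tilde{F})_0$ is the subgroup computed in Lemma~\ref{lem:stab}, namely $\mathcal{E}\circ i_*(\pi_1(\tilde{F},p))$. Hence the number of components is the index $[\pi_1(\hat{\PP^2}\setminus\hat{B}) : i_*(\pi_1(\tilde{F},p))]$ — note this index is finite precisely because the ambient group is finite. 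Thus it remains to check that this index agrees with $[\HH_1(\hat{\PP^2}\setminus\hat{B}):\mcL(F,B)]$, which is the homology-level incarnation of the same data.

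The third step is therefore passing from $\pi_1$ to $\HH_1$. Because $\pi_1(\hat{\PP^2}\setminus\hat{B})$ is abelian (it is a quotient of $\pi_1(\PP^2\setminus B)$, which is abelian since $B$ is smooth), it coincides with $\HH_1(\hat{\PP^2}\setminus\hat{B})$, and under this identification $i_*(\pi_1(\tilde{F},p))$ is sent onto the image of $\HH_1(\tilde F)\to\HH_1(\hat{\PP^2}\setminus\hat B)$ — that is, onto $\mcL(F,B)$ — because the Hurewicz map is natural and the abelianization of $\pi_1(\tilde F)$ surjects onto $\HH_1(\tilde F)$. Consequently the two indices coincide, and chaining the equalities gives $s_\Psi(F)=[\pi_1(\hat{\PP^2}\setminus\hat{B}):i_*\pi_1(\tilde F)]=[\HH_1(\hat{\PP^2}\setminus\hat B):\mcL(F,B)]=[B:F]_\mcL$.

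I expect the only genuinely delicate point to be the bookkeeping in the first step: making sure that ``number of irreducible components of $\hat{\Psi}^*\tilde F$'' (Proposition~\ref{propo:splitting_components}), ``number of connected components of $\Phi^*\tilde F$'', and ``number of components of the restricted covering over $\tilde F$'' all literally denote the same integer. This needs the observations that $\tilde F$ is smooth irreducible (so irreducible $=$ connected for its preimage), that $\tilde F$ does not meet $\hat B$ (established in the construction in Section~\ref{sec:linking}), and that $\Phi$ restricted over $\tilde F$ is an honest unramified cover because each $I_P$ is a multiple of $m$. Everything after that is the textbook correspondence between subgroups, covers, and transitive actions, together with the abelianness already recorded in the paragraph preceding Lemma~\ref{lem:stab}.
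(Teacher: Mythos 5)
Your proposal is correct and follows essentially the same route as the paper's proof: it combines Proposition~\ref{propo:splitting_components}, the transitive action of the deck group $\Aut_\Phi(U)\cong\ZZ_m$ on the connected components of $\Phi^*\tilde{F}$ with stabilizer computed by Lemma~\ref{lem:stab}, and the abelianness of $\pi_1(\hat{\PP^2}\setminus\hat{B})$ to pass to homology and identify the resulting index with $[B:F]_\mcL$. Your extra care about the Hurewicz/abelianization step and about the unramifiedness of $\Phi$ over $\tilde{F}$ only makes explicit what the paper states briefly.
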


\begin{proof}
	Since $\pi_1(\hat{\PP^2}\setminus\hat{B},p)$ is abelian, then it is equivalent to consider the first homology group instead of the fundamental group. By the proof of Lemma~\ref{lem:stab}, $\Aut_{\Phi}(U)=\ZZ_m$ acts on the set $\mathrm{CC}{(\Phi^*\tilde{F})}$ of the connected components of $\Phi^*\tilde{F}$ transitively. Hence we have the following equation:
	\begin{equation*}
		\# \mathrm{CC}{(\Phi^*\tilde{F})}=[\ZZ_m : \Stab \big(\Phi^*\tilde{F}\big)_0]. 
	\end{equation*}
	Since $\Phi$ is the restriction of $\hat{\Psi}$ to $U:=\hat{\Psi}^{-1}(\hat{\PP^2}\setminus \hat{B})$, Proposition~\ref{propo:splitting_components} implies that $\# \mathrm{CC}{(\Phi^*\tilde{F})} = s_\Psi(F)$; and by Lemma~\ref{lem:stab}, we have that $\Stab \big(\Phi^*\tilde{F}\big)_0=i_*(\HH_1(\tilde{F}))$ since the stabilizer is contained in an abelian group. By hypothesis, $\HH_1(\hat{\PP^2}\setminus\hat{B})\simeq \ZZ_m$, then we obtain:
	\begin{equation*}
		[\ZZ_m : \Stab \big(\Phi^*\tilde{F}\big)_0]=[ \HH_1(\hat{\PP^2}\setminus\hat{B}) : i_*(\HH_1(\tilde{F})) ]. 
	\end{equation*}
	We conclude using the definition of the linking index given in Definition~\ref{def:linking}-(2).
\end{proof}

\section{The non-homotopicity of the linking set}\mbox{}

To prove the non-homotopicity of the linking set, we first recall the results of Shimada \cite{Shi:equisingular} and the second author \cite{Shi}. Then using the relations previously proven between the linking set and the curves linking together with the one between the curves linking and the splitting numbers, we obtain that the linking set detects Shimada's $\pi_1$-equivalent Zariski $k$-plets.

Let $\deg(B)=b\geq 3$. Since, by hypothesis, $m$ divises $b$, we define $n\in\NN^*$ such that $b=mn$.

\begin{definition}[{\cite[Definition~1.1]{Shi:equisingular}}]
	A projective plane curve $C\subset\PP^2$ is \textit{of type $(b,m)$} if it satisfies the following conditions; 
	\begin{enumerate}
	\item $C$ consists of two irreducible components $B$ and $F$ of degree $b$ and $3$, respectively, 
	\item $B$ and $F$ are smooth,
	\item the set $B \cap F$ consists of $3n$ points,
	\item the multiplicity of each point of $B \cap F$ is $m$.
	\end{enumerate}
\end{definition}

Let $\mathcal{F}_{b,m}$ be the family of all curves of type $(b,m)$. Using the notations and following the constructions of~\cite[Section~2]{Shi:equisingular}, $\mathcal{F}_{b,m}$ can be decomposed as:
\begin{equation*}
	\mathcal{F}_{b,m}=\coprod_{\mu | m}\mathcal{F}_{b,m}(\mu).
\end{equation*}
In \cite{Shi:equisingular}, Shimada proves the following theorem. 

\begin{thm}[{\cite[Theorem~1.2]{Shi:equisingular}}]\label{thm:shimada}
	Suppose that $b\geq 4$, and let $m$ be a divisor of $b$. 
	\begin{enumerate}
	\item The number of connected components of $\mathcal{F}_{b,m}$ is equal to the number of divisors of $m$.
	\item Let $C$ be a member of $\mathcal{F}_{b,m}$. Then the fundamental group $\pi_1(\PP^2\setminus C)$ is isomorphic to $\ZZ$ if $b$ is not divisible by $3$, while it is isomorphic to $\ZZ\oplus\ZZ/3\ZZ$ if $b$ is divisible by $3$. 
	\end{enumerate}	
\end{thm}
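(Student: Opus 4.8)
The plan is to treat the two parts separately; throughout I write $C=B+F$ with $B|_F=mZ$ for a reduced effective divisor $Z$ of degree $3n$ on the smooth cubic $F$, and $H$ for the hyperplane class.

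\emph{Part (1).} The invariant separating the components should be a torsion line bundle on $F$. Since $\mathcal O_F(mZ)\cong\mathcal O_F(bH)$, the class $\tau_C:=[\mathcal O_F(Z)]-n[\mathcal O_F(H)]\in\Pic^0(F)$ is $m$-torsion; put $\mu(C):=\mathrm{ord}(\tau_C)$, a divisor of $m$. Working over the irreducible space $\mathcal S$ of smooth plane cubics and trivialising the local system $\Pic^0(F)[m]\cong(\ZZ/m\ZZ)^2$ locally, one sees that $\mu$ is locally constant on $\mathcal F_{b,m}$, and that $\mathcal F_{b,m}(\mu)=\{C:\mu(C)=\mu\}$. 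It then remains to show (a) every $\mu\mid m$ is attained and (b) each $\mathcal F_{b,m}(\mu)$ is connected. For (a), fix $F$ and a class $\tau\in\Pic^0(F)$ of order $\mu$, take a generic reduced divisor $Z$ in the class $n[\mathcal O_F(H)]+\tau$ (a linear system $\cong\PP^{3n-1}$), and then a generic degree-$b$ curve $B$ with $B|_F=mZ$: the restriction $\HH^0(\PP^2,\mathcal O(b))\to\HH^0(F,\mathcal O_F(b))$ is onto because $\HH^1(\mathcal O_{\PP^2}(b-3))=0$, and a Bertini argument off $F$ together with a local jet computation at the $3n$ points of $Z$ shows that a generic such $B$ is smooth once $b\ge4$. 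For (b), fibre the incidence variety of triples $(F,Z,B)$ over the étale cover $\{(F,\tau):\mathrm{ord}(\tau)=\mu\}\to\mathcal S$; the fibres over a fixed $(F,\tau)$ are Zariski-open in a tower of projective and affine bundles, hence irreducible, while the cover is connected because the monodromy of the universal smooth plane cubic surjects onto $\mathrm{SL}_2(\ZZ/m\ZZ)$ (classically, degenerations to nodal cubics give enough transvections) and $\mathrm{SL}_2(\ZZ/m\ZZ)$ acts transitively on the elements of order $\mu$ in $(\ZZ/m\ZZ)^2$. The connected total space dominates $\mathcal F_{b,m}(\mu)$, so $\mathcal F_{b,m}(\mu)$ is connected, and letting $\mu$ run over the divisors of $m$ yields the count.

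\emph{Part (2).} Put $W=\PP^2\setminus C$. The standard relation from a generic line gives $\HH_1(W)\cong\ZZ\mu_B\oplus\ZZ\mu_F/(b\mu_B+3\mu_F)$, which is $\ZZ$ if $3\nmid b$ and $\ZZ\oplus\ZZ/3\ZZ$ if $3\mid b$; so it is enough to prove that $\pi_1(W)$ is abelian. I would run Zariski--van Kampen on the pencil of lines through a general point $O\notin C$. For generic $O$ the singular fibres are: the $b(b-1)$ simple tangent lines to $B$ and the $6$ simple tangent lines to $F$, each giving a half-twist relation identifying two meridians of $B$ (resp.\ of $F$); and the $3n$ lines through the points of $B\cap F$, where two strands collide --- one on $B$, one on $F$ --- and, as $B$ and $F$ have contact of order $m$ there, the local braid is $m$ full twists, producing the relation that a conjugate of $(\mu_B\mu_F)^m$ is central (i.e.\ commutes with the corresponding conjugates of $\mu_B$ and $\mu_F$). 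Since $B$ and $F$ are irreducible, the tangent-line relations make all meridians of $B$ conjugate to a single $\mu_B$ and all meridians of $F$ to a single $\mu_F$, and one is left with these $3n$ centrality relations (with various conjugators) together with the global relation $\prod_ix_i=1$, which abelianises to $b\mu_B+3\mu_F=0$.

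The main obstacle is this last step, because for $m\ge2$ the bare relation ``$(\mu_B\mu_F)^m$ central'' is far from forcing $[\mu_B,\mu_F]=1$: one must exploit that the $3n$ node-type relations come with conjugators distributed around $\PP^1$, and that $b=mn$ and $\deg F=3$. I see two routes. One is a careful bookkeeping of the braid monodromy, showing that the conjugating words generated by the $b(b-1)+6$ tangencies, fed into the $3n$ node relations, collapse the group onto $\HH_1(W)$. The other is to pass to the triple cyclic cover $\rho\colon V\to\PP^2$ branched along $F$: here $V$ is a smooth (hence simply connected) cubic surface and $V\setminus\rho^{-1}(F)$ is the universal cover of $\PP^2\setminus F$, so from $1\to\pi_1(V\setminus\rho^{-1}(C))\to\pi_1(W)\to\ZZ/3\ZZ\to1$ the problem reduces to a curve complement in a rational surface in which $\rho^{-1}(B)$ has only $w^3=u^m$ singularities over $B\cap F$, where the computation is more tractable. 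In either case, once $\pi_1(W)$ is known to be abelian it equals $\HH_1(W)$, and the alternative $\ZZ$ versus $\ZZ\oplus\ZZ/3\ZZ$ is exactly the elementary computation of $\HH_1(W)$ recorded above.
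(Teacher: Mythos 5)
First, a point of reference: the paper itself gives no proof of this statement --- it is quoted verbatim as Theorem~1.2 of \cite{Shi:equisingular}, so your attempt can only be measured against Shimada's original argument, not against anything in this text. Your part~(1) is in outline the standard (and, in essence, Shimada's) strategy: the order $\mu$ of the torsion class $[\mathcal{O}_F(Z)]-n[\mathcal{O}_F(H)]\in\Pic^0(F)$ is locally constant and labels the pieces, every divisor of $m$ is realised, and connectedness of each piece follows from irreducibility of the fibres of the incidence variety together with the surjection of the monodromy of smooth plane cubics onto $\mathrm{SL}_2(\ZZ/m\ZZ)$ and its transitivity on order-$\mu$ elements of $(\ZZ/m\ZZ)^2$. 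This is sound as a sketch; the one step you assert rather than prove is the existence of a \emph{smooth} $B$ of degree $b$ with $B|_F=mZ$ --- in the extreme case $m=b$, $n=1$ the curve $B$ is forced to have contact of order $b$ with $F$ at each of three prescribed points, and the Bertini-plus-jets argument really has to be written down there, since smoothness at the imposed tangency points is not a generic-position triviality.

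The genuine gap is part~(2). What you actually establish is only the abelianisation $\HH_1(\PP^2\setminus C)\cong\ZZ$ or $\ZZ\oplus\ZZ/3\ZZ$; the entire content of the statement is that $\pi_1(\PP^2\setminus C)$ is abelian, and this you do not prove. You list the Zariski--van Kampen relations correctly (half-twist identifications at the $b(b-1)+6$ simple tangent lines, and, at the $3n$ lines through $B\cap F$, centrality of conjugates of $(\mu_B\mu_F)^m$ relative to the corresponding conjugates of $\mu_B,\mu_F$), and you yourself observe that for $m\ge 2$ these relation \emph{types}, taken abstractly, do not force $[\mu_B,\mu_F]=1$: whether the group collapses onto its abelianisation depends on the precise conjugating words, i.e.\ on the actual global braid monodromy, which you never compute. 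The two escape routes you name --- ``careful bookkeeping of the braid monodromy'', or passing to the triple cyclic cover $w^3=f_F$ (a smooth cubic surface) and analysing the preimage of $B$ there --- are both left entirely unexecuted, and each is itself a substantial computation, not a remark. So as written the argument could just as well terminate in a nonabelian group, and the theorem is not proved. A legitimate way to close it, given your part~(1), would be to note that $\pi_1$ is constant along a connected equisingular family and then carry out the full monodromy computation for one explicit member of each $\mathcal{F}_{b,m}(\mu)$; alternatively, do what the paper does and simply cite Shimada.
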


Hence the members of $\mathcal{F}_{b,m}$ can not be distinguished by the fundamental groups of their complements.  The following proposition is a consequence of \cite[Proposition~2.5]{Shi}, and it enables us to prove that the members of $\mathcal{F}_{b,m}$ provide a $\pi_1$-equivalent Zariski $k$-plet. 

\begin{propo}[{\cite[Proposition~2.5]{Shi}}]\label{propo:shirane}
Let $b$ and $m$ be positive integers such that $b\geq 3$ and $b\equiv 0\pmod{m}$. 
Let $\mu$ be a divisor of $m$, and let $C=B+F$ be a member of $\mathcal{F}_{b,m}(\mu)$. 
For the simple cyclic cover $\Psi:X_m\to\PP^2$ of degree $m$ branched along $b$, the splitting number $s_{\Psi}(F)$ of $F$ is equal to $m/\mu$. 
\end{propo}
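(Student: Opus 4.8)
The plan is to read off the splitting number from Theorem~\ref{thm:splittingnumber} as the order of a torsion class on the cubic $F$, and then to match that order with the invariant by which Shimada stratifies $\mathcal{F}_{b,m}$. To this end, I would first make explicit the data entering Theorem~\ref{thm:splittingnumber} for the simple cyclic cover $\Psi:X_m\to\PP^2$ of degree $m$ branched along $B$. Its defining surjection $\theta:\pi_1(\PP^2\setminus B)\cong\ZZ/b\ZZ\twoheadrightarrow\ZZ/m\ZZ$ sends the meridian of the irreducible curve $B$ to $[1]$, so in the notation of Section~\ref{sec:splitting} one has $B_1=B$ and $B_i=0$ for $i\geq 2$, whence $B_\theta=B$ and $B_\theta|_F=B\cap F$. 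By the definition of a curve of type $(b,m)$, the intersection $B\cap F$ consists of $3n$ points $P_1,\dots,P_{3n}$ (with $b=mn$), each of multiplicity $m$; thus, as a divisor on $F$,
\[
	B_\theta|_F=m\,(P_1+\dots+P_{3n}),\qquad \deg\bigl(B_\theta|_F\bigr)=3b.
\]

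Next I would translate condition~(2) of Theorem~\ref{thm:splittingnumber}. Let $L$ be a line of $\PP^2$. Since $H^1\bigl(\PP^2,\mcO_{\PP^2}(j)\bigr)=0$ for every $j$ and $\mcO_{\PP^2}(F)=\mcO_{\PP^2}(3)$, the restriction map $H^0\bigl(\PP^2,\mcO_{\PP^2}(k)\bigr)\to H^0\bigl(F,\mcO_F(k)\bigr)$ is surjective for all $k\geq 0$; hence every effective divisor on the smooth plane cubic $F$ lying in the class $k[L|_F]$ is cut out on $F$ by a plane curve of degree $k$ (projective normality of $F$). Therefore, for a divisor $\nu$ of $m$, there is a divisor $D$ on $\PP^2$ with $\nu D|_F=B_\theta|_F$ if and only if the effective divisor $\tfrac{m}{\nu}(P_1+\dots+P_{3n})$, of degree $\tfrac{3b}{\nu}$, lies in the class $\tfrac{b}{\nu}[L|_F]$. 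Setting
\[
	\delta:=\bigl[P_1+\dots+P_{3n}\bigr]-n\,[L|_F]\in\Pic^0(F),
\]
which is annihilated by $m$ because $m(P_1+\dots+P_{3n})=B\cap F$ is cut out by $B$, so $m[P_1+\dots+P_{3n}]=b[L|_F]$, the above equivalence reads $\tfrac{m}{\nu}\,\delta=0$, i.e.\ $\operatorname{ord}(\delta)\mid \tfrac{m}{\nu}$. The largest divisor $\nu$ of $m$ with this property is $\nu=m/\operatorname{ord}(\delta)$, so Theorem~\ref{thm:splittingnumber} gives $s_\Psi(F)=m/\operatorname{ord}(\delta)$.

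Finally I would identify $\operatorname{ord}(\delta)$ with $\mu$: by the construction of the decomposition $\mathcal{F}_{b,m}=\coprod_{\mu\mid m}\mathcal{F}_{b,m}(\mu)$ in \cite[Section~2]{Shi:equisingular}, a member $C=B+F$ of $\mathcal{F}_{b,m}$ belongs to $\mathcal{F}_{b,m}(\mu)$ exactly when the torsion class $\delta=[P_1+\dots+P_{3n}]-n[L|_F]\in\Pic^0(F)$ has order $\mu$ (equivalently, expressed through the group law of the elliptic curve $F$, which is the language of \cite{Shi:equisingular}). Plugging this into the previous step yields $s_\Psi(F)=m/\mu$, as claimed. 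The only step that is not self-contained is this last identification — reconciling the class $\delta$ above with the invariant Shimada uses to split $\mathcal{F}_{b,m}$, which is precisely the content of \cite[Proposition~2.5]{Shi}; everything else reduces to Theorem~\ref{thm:splittingnumber} together with the elementary cohomological fact recalled above, so the main obstacle is keeping Shimada's parametrization conventions and the present ones aligned.
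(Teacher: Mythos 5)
The paper offers no proof of this proposition at all: it is quoted verbatim as \cite[Proposition~2.5]{Shi}, so there is no internal argument to compare with, and what you have written is in effect an independent derivation from Theorem~\ref{thm:splittingnumber}. Your steps check out: for the simple cyclic cover the surjection $\theta$ sends the meridian of the irreducible curve $B$ to $[1]$, so $B_\theta=B$ and $B_\theta|_F=m(P_1+\dots+P_{3n})$; reading the condition $\nu D|_F=B_\theta|_F$ as an equality of divisors on $F$ forces $D|_F=\tfrac{m}{\nu}(P_1+\dots+P_{3n})$ with $\deg D=b/\nu$, and conversely projective normality of the smooth cubic produces such a $D$ whenever $\tfrac{m}{\nu}(P_1+\dots+P_{3n})\sim\tfrac{b}{\nu}L|_F$, so the criterion is exactly $\tfrac{m}{\nu}\,\delta=0$ and $s_\Psi(F)=m/\operatorname{ord}(\delta)$. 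The one input you flag as unverified --- that $\mathcal{F}_{b,m}(\mu)$ is precisely the stratum where $\operatorname{ord}(\delta)=\mu$ --- is indeed how the decomposition is defined in \cite[Section~2]{Shi:equisingular}, so this is a reliance on the cited definition rather than a mathematical gap; with that definition in hand your argument is complete and is essentially Shirane's original proof. It is also worth noticing that your class $\delta$ is, up to sign and the identification $\tilde F\cong F$, the class $[\mcF|_{\tilde F}]\in\Pic^0(\tilde F)$ appearing in the paper's proof of Theorem~\ref{thm:splittingnumber} (there $I_P=m$ for all $P$, so $\mcF|_{\tilde F}\cong\mcO_F\bigl(nL|_F-(P_1+\dots+P_{3n})\bigr)$), whose order is shown there to be $m/s_\Psi(F)$; so your route and the machinery the paper develops are two phrasings of the same computation.
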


As a consequence of Theorem~\ref{thm:equivalence} and Proposition~\ref{propo:shirane}, we obtain that the connected components of $\mathcal{F}_{b,m}$ can be distinguished using the linking index.

\begin{thm}\label{thm:index}
	Let $C=B+F$ be a curve of type $(b,m)$ with $b\geq 4$, contained in the connected component $\mathcal{F}_{b,m}(\mu)$ for a fixed divisor $\mu$ of $m$. The linking index of $F$ with $B$ is given by:
	\begin{equation*}
		[B,F]_\mcL = m/\mu.
	\end{equation*}
\end{thm}

\begin{propo}\label{propo:equivalence}
	Let $C=B+F$ be a curve such that $F$ and $B$ are smooth. The curves linking, the linking index and the linking set are equivalent.
\end{propo}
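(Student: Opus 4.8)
The plan is to establish that each of the three quantities determines the other two by exhibiting, in each direction, a canonical recovery procedure. First I would fix the set-up. Since $F$ and $B$ are smooth plane curves they are automatically irreducible: two distinct components of a plane curve meet by B\'ezout's theorem, and the curve is singular at such a point. In particular condition~(\ref{eq:cond1}) holds; and since $B$ is a smooth irreducible curve of degree $b$ one has $\HH_1(\PP^2\setminus B)\cong\ZZ_b$, so $\HH_1(\PP^2\setminus B)/\Ind_F$ is a finite cyclic group, condition~(\ref{eq:cond2}) holds, and through the isomorphism $\phi$ of~(\ref{eq:isomorphism}) the group $\HH_1(\hat{\PP^2}\setminus\hat{B})$ is finite cyclic, say isomorphic to $\ZZ_m$.

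With this in hand, the equivalence of the curves linking and the linking set is immediate from Theorem~\ref{thm:equiv_linking}: $\phi$ is a canonical isomorphism carrying $\mcL(F,B)$ onto $\lks{F}$, so either subgroup is recovered from the other, and since the isomorphism $\hat{h}$ built in the proof of Theorem~\ref{thm:linking} is defined through $\phi$, a homeomorphism of pairs taking $(F_1,B_1)$ to $(F_2,B_2)$ intertwines the two invariants simultaneously; hence they distinguish exactly the same such pairs. The passage from the curves linking to the linking index is then trivial, since by Definition~\ref{def:linking}-(2) the linking index is the index of $\mcL(F,B)$ in $\HH_1(\hat{\PP^2}\setminus\hat{B})$, and Corollary~\ref{cor:linking_index} already records that it is an invariant of the homeomorphism class of the pair.

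The only point that requires an argument, and the step I expect to be the crux, is the reverse passage: recovering the subgroup $\mcL(F,B)$ from the single number $[B:F]_\mcL$. Here I would use that $\HH_1(\hat{\PP^2}\setminus\hat{B})$ is finite cyclic, so it has a \emph{unique} subgroup of each index dividing its order; hence $\mcL(F,B)$ is that unique subgroup of index $[B:F]_\mcL$ and is thereby recovered. The cyclicity of this group — which rests on $B$ being smooth, hence irreducible, so that $\HH_1(\PP^2\setminus B)$ is cyclic — is exactly what makes a single numerical index as strong as the full subgroup; without it the index would in general lose information. Assembling the three recoveries — image under $\phi$, taking the index, and taking the unique subgroup of a prescribed index — shows that the curves linking, the linking index and the linking set determine one another, which is the claimed equivalence; in particular the three invariants detect exactly the same Zariski pairs (and $k$-plets) among curves $C=B+F$ with $F$ and $B$ smooth.
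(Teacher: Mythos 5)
Your proposal is correct and follows essentially the same route as the paper: verify conditions~(\ref{eq:cond1}) and~(\ref{eq:cond2}) from smoothness, invoke Theorem~\ref{thm:equiv_linking} to pass between the curves linking and the linking set, and use that $\HH_1(\hat{\PP^2}\setminus\hat{B})$ is finite cyclic so that a subgroup is determined by its index, which is exactly how the paper recovers $\mcL(F,B)$ from $[B:F]_{\mcL}$. Your write-up merely spells out (correctly) a few points the paper leaves implicit, such as smooth plane curves being irreducible.
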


\begin{proof}
	If $B$ is smooth, then $\HH_1(\hat{\PP^2}\setminus\hat{B})\simeq \ZZ_m$. Since $\mcL(F,B)$ is a sub-group of a cyclic group, then it is determined by its index, that is the linking index. By definition, the linking index is determined by the curves linking. The equivalence between the linking set and the curves linking is given by Theorem~\ref{thm:equiv_linking}. Indeed, $B$ and $F$ are smooth, then they verify the condition~\ref{eq:cond2}. The condition~\ref{eq:cond1} is verified since $F$ is smooth.
\end{proof}

We conclude with the following corollary obtained from Proposition~\ref{propo:equivalence}, Theorem~\ref{thm:index} and Theorem~\ref{thm:shimada}.

\begin{cor}
	The curves linking, the linking index and the linking set are not determined by the fundamental group of the complement of a curve.
\end{cor}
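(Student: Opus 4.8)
The plan is to assemble the final corollary purely from the results already proven, using Shimada's family $\mathcal{F}_{b,m}$ as the source of $\pi_1$-equivalent curves whose linking invariants differ. First I would fix an integer $b \geq 4$ which is divisible by $3$ (for instance $b = 6$) together with a divisor $m$ of $b$ admitting at least two distinct divisors (e.g. $m = b$, or any $m$ that is not $1$ and not prime would do, but even $m=3$ suffices since it has the two divisors $1$ and $3$). By Theorem~\ref{thm:shimada}-(1), the family $\mathcal{F}_{b,m}$ then has at least two connected components $\mathcal{F}_{b,m}(\mu_1)$ and $\mathcal{F}_{b,m}(\mu_2)$ with $\mu_1 \neq \mu_2$; pick curves $C_1 = B_1 + F_1 \in \mathcal{F}_{b,m}(\mu_1)$ and $C_2 = B_2 + F_2 \in \mathcal{F}_{b,m}(\mu_2)$.

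Next I would record that $C_1$ and $C_2$ have the same combinatorics (they are both of type $(b,m)$, so the degrees of the components, the number of intersection points $3n$, and the intersection multiplicities $m$ all coincide) and, by Theorem~\ref{thm:shimada}-(2), the same fundamental group $\pi_1(\PP^2 \setminus C_i) \cong \ZZ \oplus \ZZ/3\ZZ$ — in particular they form a $\pi_1$-equivalent $k$-plet. Then I would invoke Theorem~\ref{thm:index}: since $C_i$ is of type $(b,m)$ with $b \geq 4$ and lies in $\mathcal{F}_{b,m}(\mu_i)$, one has $[B_i : F_i]_{\mcL} = m/\mu_i$. As $\mu_1 \neq \mu_2$, the two linking indices differ, so the linking index is not determined by $\pi_1$. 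By Proposition~\ref{propo:equivalence} — applicable because $B_i$ and $F_i$ are smooth by the definition of type $(b,m)$ — the curves linking, the linking index, and the linking set are mutually equivalent invariants; hence none of the three is determined by $\pi_1(\PP^2 \setminus C)$.

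I do not expect any genuine obstacle here: the corollary is a bookkeeping assembly of Theorem~\ref{thm:shimada}, Theorem~\ref{thm:index}, and Proposition~\ref{propo:equivalence}, and the only point requiring a moment's care is the choice of parameters — one must make sure that the chosen $m$ really does have two distinct divisors so that $\mathcal{F}_{b,m}$ is genuinely disconnected, and (if one wants the $\ZZ \oplus \ZZ/3\ZZ$ case rather than the $\ZZ$ case) that $3 \mid b$; but since Theorem~\ref{thm:shimada} gives $\pi_1$-equivalence regardless of the divisibility of $b$ by $3$, even this is optional. The cleanest write-up simply says: choose $b$ and $m$ so that $m$ has two divisors $\mu_1 \neq \mu_2$, take $C_j \in \mathcal{F}_{b,m}(\mu_j)$, note $\pi_1(\PP^2 \setminus C_1) \cong \pi_1(\PP^2 \setminus C_2)$ by Theorem~\ref{thm:shimada}, observe $[B_1:F_1]_{\mcL} = m/\mu_1 \neq m/\mu_2 = [B_2:F_2]_{\mcL}$ by Theorem~\ref{thm:index}, and conclude via Proposition~\ref{propo:equivalence} that the linking set (and the curves linking) likewise distinguish $C_1$ from $C_2$ while $\pi_1$ does not.
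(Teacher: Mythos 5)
Your proposal is correct and follows exactly the paper's intended argument: the corollary is stated as a direct consequence of Theorem~\ref{thm:shimada} ($\pi_1$-equivalence of members of $\mathcal{F}_{b,m}$), Theorem~\ref{thm:index} (distinct linking indices $m/\mu$ on distinct components), and Proposition~\ref{propo:equivalence} (equivalence of the three linking invariants for smooth $F$ and $B$). Your explicit choice of parameters with $m$ having at least two divisors, and the remark that divisibility of $b$ by $3$ is immaterial, is just the careful bookkeeping the paper leaves implicit.
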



\end{document}